\numberwithin{equation}{section}
\newtheorem{theorem}{Theorem}[section]
\newtheorem{lemma}[theorem]{Lemma}
\theoremstyle{remark}
\newtheorem{remark}{Remark}[section]
\newtheorem{definition}{Definition}[section]
\newcommand{\D}{\mathbb{D}}
\DeclareMathOperator{\RE}{Re}
 \def \a{\alpha}
 \title[Generalized radius problems for quotient functions]{Generalized radius problems for quotient functions having fixed second coefficient}
 \author[S. Anand]{Swati Anand}
 \address{Department of Mathematics, University of Delhi,
 Delhi--110 007, India}
 \email{swati\_anand01@yahoo.com}
 \author [N.K. Jain]{Naveen Kumar Jain}
 \address {Department of Mathematics, Aryabhatta College, Delhi-110021,India}
 \email{naveenjain05@gmail.com}
  \author [S. Kumar]{Sushil Kumar}
 \address {Bharati Vidyapeeth's college of Engineering, Delhi-110063, India}
 \email{sushilkumar16n@gmail.com}
\begin{document}
\subjclass[2010]{30C80, 30C45}

\keywords{Subordination, radius problems, starlike functions,  parabolic
starlike functions.}

\maketitle

\begin{abstract}
In this manuscript, we deal with three classes of quotient functions having fixed second coefficient described on open unit disk. The radius of strongly starlikeness,  lemniscate starlikeness, lune starlikeness,  parabolic starlikeness, sine starlikeness, exponential starlikeness and several other radius estimates for  such classes are examined.
Relevant relations of obtained radius estimates with the existing estimates are also discussed.
\end{abstract}

\section{Preliminary Literature}\label{sec1}
Bieberbach theorem gives the bound on second coefficient in the Maclaurin series expansion of the univalent functions which has the significant part in univalent function theory. For that reason,
the functions with fixed second coefficients has attracted the interest of many authors over the decades.
Initially, Gronwall~\cite{Gronwall} investigated a subclass of  functions $f\in\mathcal{S}$ with fixed second coefficient. In 2011, Ali \emph{et al.}~\cite{ANV} discussed second order differential subordination in the context of the analytic functions having  fixed second coefficient.
In~\cite{lee}, Lee \emph{et al.} discussed some useful applications
on differential subordination for such functions. Recently, Kumar \emph{et al.}~\cite{sushil17} computed the sharp bound on
initial  coefficients of Ma-Minda type univalent functions with fixed
second coefficient.
Several authors have worked in the direction of growth and distortion  estimates, subordination, Livingstons problem and radius problems for functions with fixed second coefficient. For related literature, see~\cite{Ural87,kumar83,mendiratta1,pas89}.
The class of the  analytic functions $f$ which are  normalized by the condition $f(0)=f'(0)-1=0$ and defined on  $\D= \{z\in \mathbb{C}: \vert z\vert <1\}$ is denoted by $\mathcal{A}$ and the class of all  functions $f\in \mathcal{A}$ of the form
\[f(z) = z+bz^2+ \cdots \quad\quad (z \in \D)\] is denoted by $\mathcal{A}_b$ where $b$ is the fixed number satisfying the inequality $0 \le b\le1$.
Let $\mathcal{S}$ be the subclass of $\mathcal{A}$ which contains  all univalent functions. Let $\mathcal{P}$ be the class of analytic functions having positive real part. Let $\mathcal{S}^*$ and $\mathcal{K}$ represent the subclasses of $\mathcal{S}$ containing starlike and convex functions, respectively. Analytically $f\in \mathcal{S}^*$ if $zf'(z)/f(z)\in \mathcal{P}$ and $f\in \mathcal{K}$ if $1+zf''(z)/f'(z)\in \mathcal{P}$ for all $z\in \mathbb{D}$. The function $f\in \mathcal{A}$ is acknowledged as close to convex if there exists a function $g\in \mathcal{K}$ such that $f'(z)/g'(z)$ has positive real part for all $z \in \mathbb{D}$. These functions were studied initially by Kaplan~\cite{kaplan}.
Let ${P}$ be a property and $\mathcal{M}$ be a set of functions. Then the real  number
 $R_{P}(\mathcal{M})=\sup \{ r>0:\,  f \in \mathcal{M} \,\mbox{has the property}\,P\, \mbox{in the disk}\,\mathbb{D}_r \}$
is radius of  the property ${P}$ for the set $\mathcal{M}$. For the class $\mathcal{S}$, the sharp radius of convexity $R_{\mathcal{K}}(\mathcal{S})$ is $2-\sqrt{3}$ due to  Nevanlinna and radius of starlikeness $\mathcal{R}_{\mathcal{S}^*}(\mathcal{S})=\tanh\frac{\pi}{4}\approx 0.65579$  due to  Grunsky.
In~\cite{ali1}, Ali \emph{et al.} determined certain radius of starlikeness for the functions $f\in \mathcal{A}_b$. For more literature on radius problems, see~\cite{gupta80, gregor,juneja, SPA21, sokol}.

In 1992,  Ma and Minda~\cite{MM} introduced and studied the covering, growth and distortion theorem for the unified class $\mathcal{S^*(\varphi)}$  of starlike functions which contains the functions $f$ satisfying
${zf'(z)}/{f(z)}\prec\varphi(z)$, $ z\in \mathbb{D}$
where  $\varphi$ is the analytic  function satisfy inequality $\RE(\varphi(z))>0$ and $'\prec'$ shows here subordination. The class $\mathcal{S^*(\varphi)}$  contains various well known subclasses of starlike  functions. For instance, $S^*((1+Az)/(1+Bz)):=S^*[A,B]$;
$(-1 \le B <A\le1)$. The class $S^*[A,B]$ is known as the class of Janowski starlike functions~\cite{janowski73}.
For $0 \le \a <1$, the class $S^*[A=1-2\a, B=-1]:=S^*(\a)$ accommodate starlike functions of order $\a$~\cite{rob}.
For $0 \le \gamma <1$, the class $S^*_{\gamma}:=S^*((1+z)/(1-z)^{\gamma})$ contains strongly stalike functions of order $\gamma$~\cite{Branan69}. These functions satisfy $\vert \mbox{arg} ({zf'(z)}/{f(z)})\vert<({\alpha \pi}/{2})$.
The class $S_p:= S^*(1+({2}/{\pi^2}) (\log (({1-\sqrt{z}})/({1+\sqrt{z}})))^2$ contains parabolic
starlike functions which are associated with  region $\{w\in \mathbb{C}: \vert w-1 \vert < \RE(w)\}$~\cite{Ronn93}. The class $S^*(\sqrt{1+z}):= S^*_L $ contains lemniscate starlike functions~\cite{SS1,sokol}.
The classes $S^*(e^z):=S^*_e$ and $S^*(\sqrt{2}-(\sqrt{2}-1)\sqrt{(1-z)/(1+2(\sqrt{2}-1)z)}):=S^*_{RL}$ which are related with the regions  $\{w\in \mathbb{C}:\vert\log w \vert<1\}$ and $\{w\in \mathbb{C}:\vert(w^2-2\sqrt{2}z+1)\vert<1\}$ respectively~\cite{mnr,mnr1}. The functions $f\in S^*_e$ are known as exponential starlike functions. In~\cite{sharma}, the class $S^*(1+(4/3)z+(2/3)z^2):=S^*_c $ contains the functions $f$ associated with cardioid domain.
In~\cite{raina}, the class $S^*(z+\sqrt{1+z^2}):=S^*_{\leftmoon}$ contains lune starlike functions which are related to region $\{w \in \mathbb{C}:\vert w^2-1\vert <2 \vert w\vert\}$.
The class $S^*(1+\sin z):=S^*_{sin}$ which is  related to the Sin function was studied in \cite{cho}.
The gemetric properties of the class $S^*(1+(z(k+z))/(k(k-z))):=S^*_R$,  where $k = \sqrt{2}+1$ were studied in~\cite{kumar}.
The class $S^*(1+z-(z^3/3)):=S^*_{N_e}$
which is associated with two cusped kidney shaped region $\{u+ ib:((u-1)^2+v^2-4/9)^3-4v^2/3<0\}$~\cite{wani}.
The subordination properties of the class $S^*_{SG}:=S^*(2/(1+e^{-z}))$ which is associated with $\vert\log (w/(2-w))\vert<1$, were discussed in~~\cite{goel}.

\section{Classes of Quotient Functions}\label{sec1}
In~\cite{ali2}, Ali \emph{et al.}  discussed radius problem for the functions $f$ fulfilling the one of the condition (i) $\RE(f(z)/g(z))>0$ (ii) $\vert (f(z)-g(z))/g(z)\vert <1$ under some restrictions on $g$. These functions are closely associated with close-to-convex functions.
Recently, Sebastian \emph{et al.} \cite{AV} computed certain best possible radius estimates for the classes of functions $f \in \mathcal{A}$ fulfilling one of the following conditions:
(i) $\RE f(z)/g(z)>0$ where $(1+(1/z))g(z)$ has positive real part (ii) $\vert (f(z)-g(z))/g(z)\vert>0$ where $(1+(1/z))g(z)$ has positive real part where $g\in \mathcal{A}$, or  (iii) $ (1+(1/z))f(z) $ has positive real part.
Motivated by the aforementioned works, we consider the  three classes of quotient functions having fixed second coefficients which are defined as:

The functions $f$ and $g$  whose Taylor series expansions are given by
\begin{equation}\label{eqf}
f(z) = z+ a_2 z^2+ \cdots
\end{equation}
and
\begin{equation}\label{eqg}
g(z)= z+g_2 z^2+ \cdots
\end{equation}
such that $ (1+z)g(z)/z  $ and  $ f(z)/g(z) $ are in $\mathcal{P}$ for all $z\in \mathbb{D}$. Then \[\dfrac{1+z}{z}g(z) = 1+(1+g_2)z+ \cdots \in \mathcal{P}.\] Thus,  we get $\vert 1+g_2\vert \le2$, that is $-3\leq g_2\ \le 1 $. Also,
$ {f(z)}/{g(z)} = 1+(a_2-g_2)z+ \cdots \in \mathcal{P}$ which gives $\vert a_2-g_2\vert \leq 2$, that is  $-5\leq a_2 \leq 3$. Now, we consider the functions $f$ and $g$ with fixed second coefficient and having following series expansion
\[f(z) = z + 5 b z^2 + \cdots\,\,\mbox{ and}\,\,
g(z) = z+ 3cz^2+ \cdots\]
where $\vert b\vert \le 1$ and $\vert c\vert \le 1$.

\begin{definition}
For $\vert b\vert \le 1$ and $\vert c\vert \le 1$, let $\mathcal{F}_{b,c}^1$ be  defined as
\[\mathcal{F}_{b,c}^1 := \left\{f \in \mathcal{A}_{5b} :\RE \left(\frac{f(z)}{g(z)} \right)>0\, \mbox{and} \RE \left(\frac{1+z}{z}g(z) \right)>0,g \in \mathcal{A}_{3c}, z \in \D\right\}. \]
\end{definition}
Let  $f$ and $g$ be two functions given by (\ref{eqf}) and (\ref{eqg}) such that $\vert(f(z)/g(z))-1\vert <1$ and $\RE ((1+z) g(z)/z) >0$. Since $\left\vert(f(z)/g(z))-1\right\vert <1$ if and only if $\RE(g(z)/f(z)) >1/2$ and so we have $\vert a_2\vert \le 1+ \vert g_2 \vert \le 4$. In the next theorem, we consider the class of functions $f$ and $g$ with fixed second coefficient whose series expansion is given by
$f(z) = z + 4b z^2 + \cdots$ and
$g(z) = z+ 3cz^2+ \cdots$, where $\vert b\vert \le 1$ and $\vert c\vert \le 1$.
\begin{definition}
For $\vert b \vert \le 1$ and $\vert c\vert \le 1$, let the class $\mathcal{F}_{b,c}^2$ be defined as
\[\mathcal{F}_{b,c}^2 := \left\{f \in \mathcal{A}_{4b} :\left\vert\frac{f(z)}{g(z)}-1 \right\vert<1\,\, \mbox{and}\,\, \RE \left(\frac{1+z}{z}g(z) \right)>0,\,\,\,g \in \mathcal{A}_{3c}, z \in \D\right\}.\]
\end{definition}
We take the function $f$  given by (\ref{eqf}) satisfying $\RE ((1+z) f(z)/z) >0$. Then $(1+z) f(z)/z = 1+(1+a_2)z+ \cdots \in \mathcal{P}$ and hence $-3\leq a_2 \leq 1$. Next, we  consider the class of following functions $f$ with fixed second coefficient
\[f(z) = z + 3b z^2 + \cdots \,\,\,\mbox{where}\,\,\, \vert b\vert \le 1.\]
\begin{definition}
For $|b| \le 1$, let $\mathcal{F}_b^3$ be defined as
\[\mathcal{F}_b^3 := \left\{f \in \mathcal{A}_{3b} :\RE \left(\frac{1+z}{z}f(z) \right)>0, z \in \D\right\} .\]
\end{definition}
The function
\begin{equation*} \label{ext3}
f_1(z) = \frac{z(1+(1+3b)z+z^2)}{(1+z)(1-z^2)}.
\end{equation*}
so that
\[\frac{1+z}{z}f_1(z) = \frac{1+w_3(z)}{1-w_3(z)}\]
where \[w_3(z) = \dfrac{z(z+(1+3b)/2)}{(1+(1+3b)z/2)}\,\,\mbox{ with}\,\,\vert 1+3b\vert \le 2.\]
It is observed that $w_3\in \mathcal{A}$  satisfies the conditions of Schwarz lemma in $\D$. Thus,  $\RE ((1+z)f_1(z)/z) >0$ in $\D$  and hence $f_1 \in \mathcal{F}_b^3$ which shows  the class  $\mathcal{F}_b^3$ is non-empty and $f_1$  is the extremal function which work to prove the sharpness of the radius estimates for this class.

In this manuscript, we determine radius estimates of the functions belonging to the classes $\mathcal{F}_{b,c}^1$, $\mathcal{F}_{b,c}^2$ and  $\mathcal{F}_b^3$ to be in the subclasses $S^*(\a)$, $S_L^* $, $S_p $, $S_e^*$, $S_c^* $, $S_{\sin}^* $, $S_{\leftmoon}^*$, $S_R^*$, $S_{RL}^*$, $S_{\gamma}^*$, $S_{N_e}^*$ and  $S_{SG}^*$ of starlike functions. We also show
some connections of obtained radius estimates with the existing ones.

\section{Generalized Radius Problems}
The class of analytic functions
$p(z) = 1+b_1z+b_2 z^2+ \cdots$ fulfilling $\RE(p(z))>\a$ where $0 \le \a <1$, is denoted by $\mathcal{P}(\a)$. It is noted that $\mathcal{P}(0)=\mathcal{P}$. For $p \in \mathcal{P}(\a) $\cite[P.170]{nehari}, we have
\[\vert b_1\vert \le 2(1-\a).\]
Denote by $P_b(\a)$ the subclass of $\mathcal{P}(\a)$ which contains following type functions
\[p(z) = 1+ 2b(1-\a)z+ \cdots, \vert b\vert \le 1\]
and let $\mathcal{P}_b:= \mathcal{P}_b(0)$.
In order to establish  our main results, following result is required:
\begin{lemma} \cite[Theorem 2, p. 213]{carty}\label{lem1}
Let $\vert b\vert \le 1$ and $0 \le \a <1$. If $p \in \mathcal{P}_b(\a)$, then for $\vert z\vert=r <1$,
\begin{equation*} \label{eq1}
\left\vert \frac{zp'(z)}{p(z)}\right\vert \le \frac{2(1-\a)r}{1-r^2} \frac{|b|r^2+2r+|b|}{(1-2\a)r^2+2(1-\a)|b|r+1}.
\end{equation*}
\end{lemma}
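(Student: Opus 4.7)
The plan is to pass from $p$ to its associated Schwarz function and reformulate the inequality as an extremal problem. Since $p \in \mathcal{P}_b(\a)$ satisfies $\RE p(z) > \a$, the standard half-plane uniformization gives a unique analytic $\omega : \D \to \D$ with $\omega(0) = 0$ and
\[
p(z) \;=\; \frac{1+(1-2\a)\omega(z)}{1-\omega(z)}.
\]
Matching the coefficient of $z$ on both sides of $p(z) = 1+2b(1-\a)z+\cdots$ forces $\omega'(0) = b$. A direct logarithmic differentiation, combining the two resulting logarithmic derivatives over the common denominator $(1-\omega)(1+(1-2\a)\omega)$, collapses the numerator to the constant $2(1-\a)$ and yields
\[
\frac{zp'(z)}{p(z)} \;=\; \frac{2(1-\a)\,z\omega'(z)}{\bigl(1-\omega(z)\bigr)\bigl(1+(1-2\a)\omega(z)\bigr)}.
\]
At this point the lemma becomes an extremal statement about Schwarz functions with prescribed first two Taylor coefficients.

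To construct the extremal $\omega$, I would write $\omega(z)=z\phi(z)$, so that $\phi:\D\to\overline{\D}$ is analytic with $\phi(0)=b$, and parametrize the whole class via Schwarz--Pick as $\phi(z)=(b+z\chi(z))/(1+\overline{b}z\chi(z))$ where $\chi:\D\to\overline{\D}$ is an arbitrary analytic function. The candidate extremal corresponds to $\chi$ being a unimodular constant aligned with $z$, which produces the Möbius--Schwarz function
\[
\omega_0(z) \;=\; \frac{z\bigl(z+|b|e^{i\vartheta}\bigr)}{1+|b|e^{-i\vartheta}z}
\]
for a suitable $\vartheta$ depending on $\arg z$. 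For this $\omega_0$, evaluated at a point where the phases of $z\omega_0'$, $1-\omega_0$, and $1+(1-2\a)\omega_0$ all line up, a routine calculation gives
\[
|z\omega_0'(z)| = \frac{r(|b|r^2+2r+|b|)}{(1+|b|r)^2}, \qquad |1-\omega_0(z)| = \frac{1-r^2}{1+|b|r},
\]
\[
|1+(1-2\a)\omega_0(z)| = \frac{1+(1-2\a)r^2+2(1-\a)|b|r}{1+|b|r},
\]
and substitution into the identity for $zp'/p$ reproduces exactly the claimed right-hand side.

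The main obstacle will be proving that $\omega_0$ really maximizes the entire ratio, rather than just individual factors, because $|1-\omega|$ and $|1+(1-2\a)\omega|$ in the denominator are competing lower bounds that cannot obviously be optimized simultaneously. I would overcome this either by a normal-families argument --- the class of admissible Schwarz functions is compact in the space of analytic functions on $\D$, so the supremum of the ratio at fixed $z$ is attained, and the extremal principle combined with the boundary Schwarz lemma forces $\chi$ to be a unimodular constant --- or by appealing to Dieudonné's lemma, which describes the exact closed disk of joint values $(\omega(z), z\omega'(z))$ at fixed $z$ subject to $\omega(0)=0$ and $\omega'(0)=b$, after which the optimization reduces to an elementary maximum over a convex planar domain. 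Either route pins down $\omega_0$, and sharpness of the resulting inequality is automatic because $\omega_0$ is itself admissible.
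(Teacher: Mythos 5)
First, note that the paper itself offers no proof of this lemma: it is imported verbatim from McCarty \cite[Theorem 2]{carty}, so there is no internal argument to compare yours against. Judged on its own terms, your setup is correct and well chosen: the representation $p=(1+(1-2\alpha)\omega)/(1-\omega)$ with $\omega(0)=0$, $\omega'(0)=b$, the identity $zp'/p=2(1-\alpha)z\omega'/\bigl((1-\omega)(1+(1-2\alpha)\omega)\bigr)$, and the candidate extremal $\omega_0(z)=z(z+|b|e^{i\vartheta})/(1+|b|e^{-i\vartheta}z)$ all check out; your three factor evaluations at an aligned point do reproduce the right-hand side exactly, so sharpness is settled.

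The gap is the inequality itself, and you have correctly located it but not closed it. The normal-families route establishes only that an extremal $\omega$ exists; the assertion that ``the extremal principle combined with the boundary Schwarz lemma forces $\chi$ to be a unimodular constant'' is the conclusion you need, not an argument for it, and nothing you write derives it. The Dieudonn\'e/Schwarz--Pick route is the viable one (write $\omega=z\phi$, control $|\phi(z)|$ by $\bigl|\tfrac{\phi(z)-b}{1-\bar b\phi(z)}\bigr|\le r$ and $|\phi'(z)|$ by $\tfrac{1-|\phi(z)|^2}{1-r^2}$, then optimize over $s=|\phi(z)|$ and the phases), but calling the remaining step ``an elementary maximum over a convex planar domain'' understates exactly the difficulty you yourself flagged: on the circle $|\omega|=\text{const}$ the factors $|1-\omega|$ and $|1+(1-2\alpha)\omega|$ are minimized at antipodal points, and at the true extremal configuration ($\omega_0(r)>0$) the second factor sits at its \emph{maximum} over that circle. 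Consequently no termwise estimate of numerator and denominator can yield the stated bound --- bounding each factor separately provably gives a strictly weaker inequality --- and one must carry out a genuinely coupled optimization in which the phase of $\omega(z)$ and the admissible range of $z\omega'(z)$ given $\omega(z)$ are tracked together. That computation is the entire content of McCarty's theorem, and it is the one piece your proposal does not supply.
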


We first determine best possible Ma-Minda type radius estimates for the class $\mathcal{F}_{b,c}^1 $.

\begin{theorem}\label{ThmF1}
Let $d=\vert5b-3c\vert\leq2$ and $c'=\vert3c+1\vert\leq2$. For the class $\mathcal{F}_{b,c}^1 $,  the following radius results hold:
\begin{enumerate}
\item  The $S^*(\a) $- radius is the smallest root $r_1 \in (0,1)$ of the equation
\begin{align}\label{eqa1}
-\a r^6&+(1+(1-\a)(c'+d))r^5+(7-\a+c'+d+(2-\a)c'd)r^4 \notag\\
&+(2+5(c'+d)+c'd)r^3+(6+\a+c'+d+(1+\a)c'd)r^2\notag\\
&+(1+\a(c'+d))r+\a-1=0.
\end{align}
\item The $S_L^* $- radius is the smallest  root $r_2 \in (0,1)$ of the equation
\begin{align}\label{eql1}
\sqrt{2}r^6&+(1+(1+\sqrt{2})(c'+d))r^5+(9+\sqrt{2}+c'+d+\sqrt{2}(\sqrt{2}+1)c'd)r^4\notag\\
&+(2+7(c'+d)+c'd)r^3+(10-\sqrt{2}+c'+d+(3-\sqrt{2})c'd)r^2 \notag\\
&+(1+\sqrt{2}(\sqrt{2}-1)(c'+d))r+1-\sqrt{2}=0.
\end{align}

\item The $S_p $-radius is the smallest  root $r_3 \in (0,1)$ of the equation
\begin{align}\label{eqp1}
-r^6&+(2+c'+d)r^5+(13+2(c'+d)+3c'd)r^4+(4+10(c'+d)+2c'd)r^3 \notag \\
&+(13+2(c'+d)+3c'd)r^2+(2+c'+d)r-1=0.
\end{align}

\item The $S_e^* $- radius is the smallest root $r_4 \in (0,1)$ of the equation
\begin{align} \label{eqe1}
-r^6&+(e+(e-1)(c'+d))r^5+(7e-1+e(c'+d)+(2e-1)c'd)r^4 \notag \\
&+(2e+5e(c'+d)+ec'd)r^3+(6e+1+e(c'+d)+(e+1)c'd)r^2\notag\\
&+(e+c'+d)r+1-e=0.
\end{align}

\item The $S_c^* $- radius is the smallest root $r_5 \in (0,1)$ of the equation
\begin{align} \label{eqc1}
&-r^6+(3+2(c'+d))r^5+(20+3(c'+d)+5c'd)r^4\notag \\
&+(6+15(c'+d)+3c'd)r^3 +(19+3(c'+d)+4c'd)r^2\notag\\
&+(3+c'+d)r-2=0.
\end{align}

\item The $S_{\sin}^* $- radius is the smallest  root $r_6 \in (0,1)$ of the equation
\begin{align}\label{eqs1}
&(1+\sin1)r^6+(1+(2+\sin1)(c'+d))r^5\notag\\
&+(10+\sin1+c'+d+(3+\sin1)c'd)r^4 \notag \\
&+(2+7(c'+d)+c'd)r^3+(9-\sin1+c'+d+(2-\sin1)c'd)r^2 \notag \\
& +(1+(1-\sin1)(c'+d))r-\sin1=0.
\end{align}

\item The $S_{\leftmoon}^*$- radius is the smallest  root $r_7 \in (0,1)$ of the equation
\begin{align}\label{eqm1}
&-(\sqrt{2}-1)r^6+(1+\sqrt{2}(\sqrt{2}-1)(c'+d))r^5\notag\\
&+(8-\sqrt{2}+c'+d+(3-\sqrt{2})c'd)r^4+(2+5(c'+d)+c'd)r^3+(5+\sqrt{2}\notag\\
&+c'+d+\sqrt{2}c'd)r^2  +(1+(\sqrt{2}-1)(c'+d))r-2+\sqrt{2}=0.
\end{align}
\item The $S_R^*$- radius is the smallest  root $r_8 \in (0,1)$ of the equation

\begin{align}\label{eqr1}
&-(2\sqrt{2}-2)r^6+(1+(3-2\sqrt{2})(c'+d))r^5+(9-2\sqrt{2}\notag\\
&+c'+d+(4+2\sqrt{2})c'd)r^4+(2+5(c'+d)+c'd)r^3+(4+2\sqrt{2}+c'\notag\\
&+d+(2\sqrt{2}-1)c'd)r^2  +(1+(2\sqrt{2}-2)(c'+d))r+2\sqrt{2}-3=0.
\end{align}

\item The $S_{RL}^*$- radius is the smallest  root $r_9 \in (0,1)$ of the equation
\begin{align}\label{eqrl1}
&((1+c'+d)r^5+(8+c'+d+2c'd)r^4+(2+6(c'+d)+c'd)r^3\notag \\
&+(8+c'+d+2c'd)r^2+(1+c'+d)r )^2-(r^4+(c'+d)r^3+(2+c'd)r^2\notag \\
&+(c'+d)r+1)^2((1-r^2)\sqrt{(1-r^2)^2-((\sqrt{2}-\sqrt{2}r^2)-1)^2}\notag\\
&-(1-r^2)^2+((\sqrt{2}-\sqrt{2}r^2)-1)^2)=0.
\end{align}

\item The $S_{\gamma}^*$- radius is the smallest  root $r_{10} \in (0,1)$ of the equation
\begin{align*}%\label{eqg1}
(1+c'+d)r ^5&+(8-\sin(\pi \gamma/2)+c'+d+2c'd)r^4 \\
&+(2+(6-\sin(\pi \gamma/2))(c'+d)+c'd)r^3\\
&+(8-2\sin(\pi \gamma/2)+c'+d+(2-\sin(\pi \gamma/2))c'd)r^2\\
&+(1+(1-\sin(\pi \gamma/2))(c'+d))r-\sin(\pi \gamma/2)= 0.
\end{align*}
\item The $S_{N_e}^*$- radius is the smallest  root $r_{11} \in (0,1)$ of the equation
\begin{align}\label{eqn1}
&5r^6+(3+8(c'+d))r^5+(32+3(c'+d)+11c'd)r^4 \nonumber \\
&+(6+21(c'+d)+3c'd)r^3+(25+3(c'+d)+4c'd)r^2 \nonumber\\
&+(3+c'+d)r-2=0.
\end{align}
\item The $S_{SG}^*$- radius is the smallest root $r_{12} \in (0,1)$ of the equation
\begin{align}\label{eqsg1}
&2er^6+(1+e+(1+3e)(c'+d))r^5+(9+11e+(1+e)(c'+d)\notag\\
&+2(1+2e)c'd)r^4
+((1+e)(2+c'd)+7(1+e)(c'+d))r^3\notag \\
&+(2(5+4e)+(1+e)(c'+d)+(3+e)c'd)r^2\notag\\
& +((1+e)+2(c'+d))r+1-e=0.
\end{align}
\end{enumerate}
\end{theorem}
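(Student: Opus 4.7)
The plan is to factor a generic $f\in\mathcal{F}_{b,c}^{1}$ as $f(z)=zh_{1}(z)h_{2}(z)/(1+z)$, where $h_{1}(z):=f(z)/g(z)$ and $h_{2}(z):=(1+z)g(z)/z$ both lie in $\mathcal{P}$ by the defining hypotheses of the class. Reading off Taylor coefficients gives $h_{1}(z)=1+(5b-3c)z+\cdots$ and $h_{2}(z)=1+(1+3c)z+\cdots$, so that $h_{1}\in\mathcal{P}_{(5b-3c)/2}$ and $h_{2}\in\mathcal{P}_{(1+3c)/2}$. The hypotheses $d=|5b-3c|\le 2$ and $c'=|1+3c|\le 2$ are exactly what makes the second coefficients of $h_{1}$ and $h_{2}$ admissible in Lemma~\ref{lem1}.

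A logarithmic differentiation of the factorisation yields
\[
\frac{zf'(z)}{f(z)}-1 \;=\; \frac{zh_{1}'(z)}{h_{1}(z)}+\frac{zh_{2}'(z)}{h_{2}(z)}-\frac{z}{1+z}.
\]
Applying Lemma~\ref{lem1} (with $\a=0$) to each $h_{i}$, together with the elementary bound $|z/(1+z)|\le r/(1-r)$ on $|z|=r$, yields the master estimate
\[
\left|\frac{zf'(z)}{f(z)}-1\right| \;\le\; \Phi(r;c',d):=\frac{r(dr^{2}+4r+d)}{(1-r^{2})(r^{2}+dr+1)}+\frac{r(c'r^{2}+4r+c')}{(1-r^{2})(r^{2}+c'r+1)}+\frac{r}{1-r}.
\]
This is the only analytic input; everything that follows is the translation of a disk-inclusion criterion into a polynomial equation in $r$.

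For each of the twelve Ma--Minda subclasses $S^{*}(\varphi)$ listed in the statement, a known disk-inclusion lemma supplies the largest constant $\delta(\varphi)$ for which $\{w:|w-1|\le\delta(\varphi)\}\subset\varphi(\mathbb{D})$; typical values are $\delta=1-\a$ for $S^{*}(\a)$, $\delta=\sqrt{2}-1$ for $S_{L}^{*}$, $\delta=1/2$ for $S_{p}$, and $\delta=1-e^{-1}$ for $S_{e}^{*}$, with analogous closed forms for the remaining cases. Forcing $\Phi(r;c',d)\le\delta(\varphi)$ and clearing the common denominator $(1-r)(1+r)(r^{2}+dr+1)(r^{2}+c'r+1)$ converts the inequality into a sextic polynomial inequality in $r$ whose boundary equation is precisely the one displayed in each item (3.1)--(3.12), and its smallest positive root in $(0,1)$ is the asserted radius. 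Sharpness is obtained from $f_{0}(z)=zH_{1}(z)H_{2}(z)/(1+z)$, where $H_{1}$ and $H_{2}$ are the extremal functions of Lemma~\ref{lem1} (each of the form $(1+\om_{i}(z))/(1-\om_{i}(z))$ for a suitable Schwarz function $\om_{i}$): at the appropriate boundary point of $|z|=r$, all three triangle inequalities used in constructing $\Phi$ become equalities simultaneously, and the extremal parameters align with $|b|,|c|\le 1$.

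The main obstacle is purely computational. Each of the twelve items requires inserting the correct $\delta(\varphi)$, expanding the identity $\Phi(r;c',d)\,\mathcal{D}(r)=\delta(\varphi)\,\mathcal{D}(r)$ with $\mathcal{D}(r)=(1-r)(1+r)(r^{2}+dr+1)(r^{2}+c'r+1)$, and organising the resulting sextic coefficients into the bilinear $(c'+d,\,c'd)$ form displayed in the statement. One also has to check in each case that the polynomial has exactly one root in $(0,1)$ (equivalently, that $\Phi(\cdot;c',d)$ is strictly increasing on the relevant subinterval), and that the associated extremal configuration is genuinely admissible; these verifications are routine but tedious, and are what justify the twelve explicit equations in the theorem.
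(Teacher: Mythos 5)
Your starting point coincides with the paper's: both factor $f(z)=zp(z)h(z)/(1+z)$ with $p=(1+z)g/z\in\mathcal{P}_{(3c+1)/2}$ and $h=f/g\in\mathcal{P}_{(5b-3c)/2}$, and both estimate $zp'/p$ and $zh'/h$ by McCarty's Lemma~\ref{lem1}. The gap is in the treatment of the M\"obius term and, as a result, in the center of the disk you end up with. The paper uses that $1/(1+z)$ maps $|z|\le r$ onto the disk $\vert w-\tfrac{1}{1-r^2}\vert\le\tfrac{r}{1-r^2}$, obtaining a disk for $zf'/f$ centered at $a=1/(1-r^2)$ of some radius $\rho(r)$, and then invokes inclusion criteria that depend on that moving center: $a-\rho\ge\a$ for $S^*(\a)$, $\rho\le a-\tfrac12$ for $S_p$, $\rho\le a-\tfrac1e$ for $S_e^*$, $\rho\le a\sin(\pi\gamma/2)$ for $S^*_\gamma$, and so on. You instead collapse everything to $\vert zf'/f-1\vert\le\Phi(r)$ and compare $\Phi$ with the largest disk centered at $1$ inside $\varphi(\D)$. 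Since $\tfrac{r}{1-r}=\tfrac{r}{1-r^2}+\tfrac{r^2}{1-r^2}$, one has $\Phi(r)=\rho(r)+\tfrac{r^2}{1-r^2}$, so your condition $\Phi\le\delta(\varphi)$ reads $\rho\le\delta(\varphi)-\tfrac{r^2}{1-r^2}$, whereas the paper's condition for, say, $S_e^*$ reads $\rho\le(1-\tfrac1e)+\tfrac{r^2}{1-r^2}$; the two thresholds differ by $2r^2/(1-r^2)$ and produce different sextics (your $S_e^*$ equation acquires leading coefficient $2e-1$ instead of the $-1$ in \eqref{eqe1}). The same mismatch affects items (1), (3), (4), (5), (7), (8), (9) and (10); only for $S_L^*$, $S_{\sin}^*$, $S_{N_e}^*$ and $S_{SG}^*$ does the relevant inclusion lemma happen to reduce to a bound on $\vert w-1\vert$, so only those four of your equations agree with the statement. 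Your method does give valid starlikeness radii, but strictly smaller ones, so it does not prove the theorem as stated.

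Separately, your sharpness argument is only asserted: you claim the three triangle inequalities are simultaneously attained by a product of McCarty extremal functions, but you neither exhibit the extremal pair $(f_0,g_0)$ nor check that equality occurs at a common point of $\vert z\vert=r$ with admissible $b,c$. The paper does this by writing down explicit rational $f_0,g_0$ built from Schwarz functions of the form $w(z)=z(z\pm k/2)/(1\pm kz/2)$ and evaluating $zf_0'/f_0$ at $z=\pm r_k$ to land on the boundary of each target region; some such verification is needed, since the extremal configurations of Lemma~\ref{lem1} for $p$, for $h$, and for the map $1/(1+z)$ need not be realized by one admissible pair at one point.
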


\begin{proof}
Let  $f \in \mathcal{F}_{b,c}^1$ and  $g\in \mathcal{A}$ such that
\begin{equation} \label{eqfg}
\RE \left(\frac{f(z)}{g(z)} \right)>0\,\quad \mbox{and} \quad \RE \left(\frac{1+z}{z}g(z) \right)>0,\,\, (z \in \D).
\end{equation}
Define the functions $p,h: \D \to \mathbb{C}$ by
\begin{equation} \label{eqp(z)}
p(z) = \frac{1+z}{z}g(z) =  1+(1+3c)z+ \cdots
\end{equation}
and
\begin{equation} \label{eqh(z)}
h(z) = \frac{f(z)}{g(z)} =  1+(5b-3c)z+ \cdots .
\end{equation}
By (\ref{eqfg}), (\ref{eqp(z)}) and (\ref{eqh(z)}), it is observed that  the functions $p \in \mathcal{P}_{(3c+1)/2}$ and $h \in \mathcal{P}_{(5b-3c)/2}$.
Thus, we have
\[f(z) = \dfrac{zp(z)h(z)}{1+z}\]
so that
\begin{equation}\label{eqf(z)}
\frac{zf'(z)}{f(z)} =\frac{zp'(z)}{p(z)}+ \frac{zh'(z)}{h(z)}+\frac{1}{1+z}.
\end{equation}
The bilinear mapping $\dfrac{1}{1+z}$ maps  $\vert z \vert \le r$ onto
\begin{equation} \label{eqtr}
\left\vert\frac{1}{1+z}-\frac{1}{1-r^2}\right\vert\le \frac{r}{1-r^2}.
\end{equation}
On taking $\a = 0$ in Lemma \ref{lem1}, we have
\begin{equation} \label{eqt1}
\left\vert\frac{zp'(z)}{p(z)}\right\vert \leq \frac{r}{1-r^2} \frac{c'r^2+4r+c'}{r^2+c'r+1}\,\,
\mbox{and}\,\,
\left\vert\frac{zh'(z)}{h(z)}\right\vert\le \frac{r}{1-r^2} \frac{dr^2+4r+d}{r^2+dr+1}.
\end{equation}
Using (\ref{eqtr}) and (\ref{eqt1}) from (\ref{eqf(z)}), we obtain
\begin{align}\label{eqdisk}
&\left\vert\frac{zf'(z)}{f(z)}- \frac{1}{1-r^2}\right\vert\notag\\ & \le \frac{r}{1-r^2}\left(\frac{c'r^2+4r+c'}{r^2+c'r+1}+\frac{dr^2+4r+d}{r^2+dr+1}+1\right)\notag\\
&= \frac{\splitfrac{(1+c'+d)r^5+(8+c'+d+2c'd)r^4+(2+6(c'+d)+c'd)r^3}{+(8+c'+d+2c'd)r^2+(1+c'+d)r}}{(1-r^2)(r^4+(c'+d)r^3+(2+c'd)r^2+(c'+d)r+1)}
\end{align}
It follows by the above inequality that
\begin{align}
&\RE\left(\frac{zf'(z)}{f(z)}\right)\notag\\ &\ge\frac{\splitfrac{-(1+c'+d)r^5-(7+c'+d+2c'd)r^4-(2+5(c'+d)+c'd)r^3}{-(6+c'+d+c'd)r^2-r+1}}{-r^6-(c'+d)r^5-(1+c'd)r^4+(1+c'd)r^2+(c'+d)r+1} \notag\\ &\ge 0\notag
\end{align}
whenever,
\[(1+c'+d)r^5+(7+c'+d+2c'd)r^4+(2+5(c'+d)+c'd)r^3+(6+c'+d+c'd)r^2+r-1 \le 0.\]

\begin{enumerate}
\item In view of (\ref{eqdisk}), it is noted that
\begin{align}
&\RE\left(\frac{zf'(z)}{f(z)}\right) \notag\\ &\ge\frac{\splitfrac{-(1+c'+d)r^5-(7+c'+d+2c'd)r^4-(2+5(c'+d)+c'd)r^3}{-(6+c'+d+c'd)r^2-r+1}}{-r^6-(c'+d)r^5-(1+c'd)r^4+(1+c'd)r^2+(c'+d)r+1}  \notag\\
&\ge \a \notag
\end{align}
whenever
\begin{align*}
&-\a r^6+(1+(1-\a)(c'+d))r^5+(7-\a+c'+d+(2-\a)c'd)r^4\\
&+(2+5(c'+d)+c'd)r^3+(6+\a+c'+d+(1+\a)c'd)r^2\\
&+(1+\a(c'+d))r+\a-1 \le 0.
\end{align*}
Thus,  the $S^*(\a)$-radius for the class $\mathcal{F}_{b,c}^1$ is the smallest root $r_1 \in (0,1)$ of (\ref{eqa1}).
Consider the functions $f_0$ and $g_0$ which are defined as
\begin{align} \label{ext1}
f_0(z) =& \frac{z(1-z)^2 (1+z)}{(1-(5b-3c)z+z^2)(1-(3c+1)z+z^2)}\qquad \text{and} \notag\\
g_0(z) =& \frac{z(1-z)}{(1-(3c+1)z+z^2)}.
\end{align}
For the function $f_0$ with the choice of $g_0$ given by (\ref{ext1}), we have
\[\frac{f_0(z)}{g_0(z)}= \frac{1-w_1(z)}{1+w_1(z)}\quad  \text{and}\quad \frac{1+z}{z}g_0(z) = \frac{1-w_2(z)}{1+w_2(z)}\]
where
\[ w_1(z) = \dfrac{z(z-(5b-3c)/2)}{(1-(5b-3c)z/2)} \quad \text{with} \quad \vert5b-3c\vert \le 2 \] and
\[w_2(z) = \dfrac{z(z-(3c+1)/2)}{(1-(3c+1)z/2)} \quad \text{with} \quad \vert3c+1\vert \le 2\]
are analytic functions satisfying the conditions of Schwarz lemma in $\D$ and thus $\RE (f_0(z)/g_0(z)) >0$ and $\RE ((1+z)g_0(z)/z) >0$ in $\D$. Hence the function $f_0 \in \mathcal{F}_{b,c}^1$.
At $z=\rho = r_1$,  we have
\begin{align*}
&\frac{zf_0'(z)}{f_0(z)}\\
%&= \frac{\splitfrac{5bz^5+(-5b(1+6c)+6(-1+c+3c^2))z^4+(3+20b+3c-15bc+9c^2)z^3}{+(-5+(3-15b)c+9c^2)z^2-z+1}}{(1-z^2)(1-(5b-3c)z+z^2)(1-(3c+1)z+z^2)}\\
&=\frac{\splitfrac{5b\rho^5+(-5b(1+6c)+6(-1+c+3c^2))\rho^4}{+(3+20b+3c-15bc+9c^2)\rho^3+(-5+(3-15b)c+9c^2)\rho^2-\rho+1}}{(1-\rho^2)(1-(5b-3c)\rho+\rho^2)(1-(3c+1)\rho+\rho^2)} \\
&= \a,
\end{align*}
shows the sharpness of the result.

\item From (\ref{eqdisk}), it follows that
\begin{align*}%\label{eqdisk1}
&\left\vert\frac{zf'(z)}{f(z)}- 1\right\vert\notag\\
  &\le  \frac{\splitfrac{r^6+(1+2(c'+d))r^5+(10+c'+d+3c'd)r^4}{+(2+7(c'+d)+c'd)r^3+(9+c'+d+2c'd)r^2+(1+c'+d)r}}{(-r^6-(c'+d)r^5-(1+c'd)r^4+(1+c'd)r^2+(c'+d)r+1)}.
\end{align*}
By using   \cite[Lemma 2.2, p.6559]{ali},
%\ref{leml}
the function $f\in  \mathcal{F}_{b,c}^1$ belongs to the class $S^*_L$ if
\begin{align*}
&\frac{\splitfrac{r^6+(1+2(c'+d))r^5+(10+c'+d+3c'd)r^4+(2+7(c'+d)+c'd)r^3}{+(9+c'+d+2c'd)r^2+(1+c'+d)r}}{(-r^6-(c'+d)r^5-(1+c'd)r^4+(1+c'd)r^2+(c'+d)r+1)} \nonumber\\ & \le \sqrt{2}-1
\end{align*}

 equivalently

\begin{align*}
\sqrt{2}r^6+ &(1+(1+\sqrt{2})(c'+d))r^5+(9+\sqrt{2}+c'+d+\sqrt{2}(\sqrt{2}+1)c'd)r^4\\
&+(2+7(c'+d)+c'd)r^3+(10-\sqrt{2}+c'+d+(3-\sqrt{2})c'd)r^2\\
& +(1+\sqrt{2}(\sqrt{2}-1)(c'+d))r+1-\sqrt{2} \le 0.
\end{align*}
The $S_L^*$-radius for the class $\mathcal{F}_{b,c}^1$ is the smallest  root $r_2 \in (0,1)$ of (\ref{eql1}).
%Using triangle inequality and (\ref{eqdisk1}) we get
%\[\vert\frac{zf'(z)}{f(z)}+ 1\vert \le \sqrt{2}+1\] and thus we have
%\[\vert\vert(\frac{zf'(z)}{f(z)}\vert) ^2- 1\vert=1.\]
Define the functions $f_0,g_0:\D \to  \mathbb{C} $ as
\begin{align} \label{ext2}
&f_0(z) = \frac{z(1+(5b-3c)z+z^2)(1+(3c+1)z+z^2)}{(1-z^2)^2(1+z)},\notag\\
&g_0(z) = \frac{z(1+ (3c+1)z+z^2)}{(1-z^2)(1+z)}.
\end{align}
For the function $f_0$ with the choice of $g_0$ given by (\ref{ext2}), we have
\[\frac{f_0(z)}{g_0(z)}= \frac{1+w_1(z)}{1-w_1(z)}\,\, \text{and}\,\, \frac{1+z}{z}g_0(z) = \frac{1+w_2(z)}{1-w_2(z)}\]
where $w_1(z) = \dfrac{z(z+(5b-3c)/2)}{(1+(5b-3c)z/2)}$ with $\vert5b-3c\vert \le 2$ and \\ $w_2(z) = \dfrac{z(z+(3c+1)/2)}{(1+(3c+1)z/2)}$ with $\vert3c+1\vert \le 2$ are analytic functions satisfying the conditions of Schwarz lemma in $\D$ and thus $\RE (f_0(z)/g_0(z)) >0$ and $\RE ((1+z)g_0(z)/z) >0$ in $\D$. Hence the function $f_0 \in \mathcal{F}_{b,c}^1$ and the number $\rho = r_2$ satisfies
\begin{align*}
&\frac{\splitfrac{-5b\rho^5+(8-6c+5b(1+6c)-18c^2)\rho^4-(5-15b(-2+c)+3c+9c^2)\rho^3}{+(9(1-c-3c^2)+5b(2+9c))\rho^2-(1+10b)\rho+1}}{(1-\rho^2)(1-(5b-3c)\rho+\rho^2)(1-(3c+1)\rho+\rho^2)}\\
&= \sqrt{2}.\end{align*}
Thus,  for the function $f_0$ defined by (\ref{ext2}) at $z =-\rho = -r_2$,  we have
\begin{align*}
&\left\vert\left(\frac{zf_0'(z)}{f_0(z)}\right)^2-1\right\vert\\
%&= \vert\vert(\frac{\splitfrac{5bz^5+(8-6c-18c^2+5b(1+6c))z^4}{\splitfrac{+(5-15b(-2+c)+3c+9c^2)z^3}{+(9(1-c-3c^2)+5b(2+9c))z^2+(1+10b)z+1}}}{(1-z^2)(1+(5b-3c)z+z^2)(1+(3c+1)z+z^2)}\vert)^2-1\vert\\
&=\left\vert\left(\frac{\splitfrac{-5b\rho^5+(8-6c-18c^2+5b(1+6c))\rho^4}{\splitfrac{-(5-15b(-2+c)+3c+9c^2)\rho^3}{+(9(1-c-3c^2)+5b(2+9c))\rho^2-(1+10b)\rho+1}}}{(1-\rho^2)(1-(5b-3c)\rho+\rho^2)(1-(3c+1)\rho+\rho^2)}\right)^2-1\right\vert\\
&= 1
\end{align*}
which shows  the  sharpness.

\item Using \cite[Section 3˘, p. 321]{TN},
%For $1/2 < a \le3/2$, by lemma \ref{lemp}
the disk (\ref{eqdisk}) lies inside the parabolic region $\Omega_p= \{w:\RE w >\vert w-1\vert\}$ provided that
\begin{align*}
&\frac{\splitfrac{(1+c'+d)r^5+(8+c'+d+2c'd)r^4+(2+6(c'+d)+c'd)r^3}{+(8+c'+d+2c'd)r^2+(1+c'+d)r}}{(1-r^2)(r^4+(c'+d)r^3+(2+c'd)r^2+(c'+d)r+1)}\\
&\le \frac{1}{1-r^2}-\frac{1}{2}.
\end{align*}
The above inequality simplifies to
\begin{align*}
&-r^6+(2+c'+d)r^5+(13+2(c'+d)+3c'd)r^4\\ &+(4+10(c'+d)+2c'd)r^3+(13+2(c'+d)+3c'd)r^2+(2+c'+d)r-1\\&\le 0.
\end{align*}
The $S_p$-radius for the class $\mathcal{F}_{b,c}^1$ is the smallest root $r_3 \in (0,1)$ of (\ref{eqp1}).
At $z= \rho = r_3$, we have
\begin{align*}
&\RE \left(\frac{zf_0'(z)}{f_0(z)}\right)\\
% &= \frac{\splitfrac{5b\rho^5+(-5b(1+6c)+6(-1+c+3c^2))\rho^4+(3+20b+3c-15bc+9c^2)\rho^3}{+(-5+(3-15b)c+9c^2)\rho^2-\rho+1}}{(1-\rho^2)(1-(5b-3c)\rho+\rho^2)(1-(3c+1)\rho+\rho^2)}\\
&=\frac{\splitfrac{-\rho^6+\rho^5-(-5-15bc+3c+9c^2)\rho^4-(3-15bc+20b+3c+9c^2)\rho^3}{-(-6-30bc-5b+6c+18c^2)\rho^2-5b\rho}}{(1-\rho^2)(1-(5b-3c)\rho+\rho^2)(1-(3c+1)\rho+\rho^2)}\\
&=\left\vert\frac{\splitfrac{\rho^6-\rho^5+(-5-15bc+3c+9c^2)\rho^4+(3-15bc+20b+3c+9c^2)\rho^3}{+(-6-30bc-5b+6c+18c^2)\rho^2+5b\rho}}{(1-\rho^2)(1-(5b-3c)\rho+\rho^2)(1-(3c+1)\rho+\rho^2)}\right\vert\\
&= \left\vert\frac{zf'(z)}{f(z)}-1\right\vert
\end{align*}
which proves the sharpness.

\item By \cite[Lemma 2.2, p.\ 368]{mnr},
%For $e^{-1} <a \le (e+e^{-1})/2$, by Lemma \ref{leme}
the function $f$ in the class $ \mathcal{F}_{b,c}^1$ belongs to the class $S_e^*$ if
\begin{align*}
&\frac{\splitfrac{(1+c'+d)r^5+(8+c'+d+2c'd)r^4+(2+6(c'+d)+c'd)r^3}{+(8+c'+d+2c'd)r^2+(1+c'+d)r}}{(1-r^2)(r^4+(c'+d)r^3+(2+c'd)r^2+(c'+d)r+1)}\\
&\le \frac{1}{1-r^2}-\frac{1}{e}
\end{align*}
equivalently
\begin{align*}
&-r^6+(e+(e-1)(c'+d))r^5+(7e-1+e(c'+d)+(2e-1)c'd)r^4  \\
&+(2e+5e(c'+d)+ec'd)r^3+(6e+1+e(c'+d)+(e+1)c'd)r^2\\
&+(e+c'+d)r+1-e \le 0.
\end{align*}
Thus,  the $S_e^*$-radius for the class $\mathcal{F}_{b,c}^1$ is the smallest positive root $r_4 \in (0,1)$ of (\ref{eqe1}).
The estimate is sharp for the function $f_0$ defined in (\ref{ext1}). At $z=\rho=r_4$, we have
\begin{align*}
\left\vert\log \frac{zf_0'(z)}{f_0(z)}\right\vert  &= \left\vert\log \left(\frac{\splitfrac{5b\rho^5+(-5b(1+6c)+6(-1+c+3c^2))\rho^4}{\splitfrac{+(3+20b+3c-15bc+9c^2)\rho^3}{+(-5+(3-15b)c+9c^2)\rho^2-\rho+1}}}
{(1-\rho^2)(1-(5b-3c)\rho+\rho^2)(1-(3c+1)\rho+\rho^2)}\right)\right\vert\\
&=1.
\end{align*}

\item
By making use of \cite[Lemma 2.5, p.926]{sharma},
%For $1/3 <a \le 5/3$, by Lemma \ref{lemc}
the function $f$ in the class $ \mathcal{F}_{b,c}^1$ belongs to the class $ S_c^*$ if
\begin{align*}
&\frac{\splitfrac{(1+c'+d)r^5+(8+c'+d+2c'd)r^4+(2+6(c'+d)+c'd)r^3}{+(8+c'+d+2c'd)r^2+(1+c'+d)r}}{(1-r^2)(r^4+(c'+d)r^3+(2+c'd)r^2+(c'+d)r+1)}\\
&\le \frac{1}{1-r^2}-\frac{1}{3}
\end{align*}
 equivalently
\begin{align*}
& -r^6+(3+2(c'+d))r^5+(20+3(c'+d)+5c'd)r^4  \\
&+(6+15(c'+d)+3c'd)r^3+(19+3(c'+d)+4c'd)r^2+(3+c'+d)r-2\\& \le 0.
\end{align*}
The $S_c^*$-radius for the class $\mathcal{F}_{b,c}^1$ is the smallest root $r_5 \in (0,1)$ of (\ref{eqc1}).
The result is sharp for the function $f_0$ defined in (\ref{ext1}). At $z= \rho=r_5$, we have
\begin{align*}
\left\vert \frac{zf_0'(z)}{f_0(z)}\right\vert  &= \left\vert \frac{\splitfrac{5b\rho^5+(-5b(1+6c)+6(-1+c+3c^2))\rho^4}{\splitfrac{+(3+20b+3c-15bc+9c^2)\rho^3}{+(-5+(3-15b)c+9c^2)\rho^2-\rho+1}}}
{(1-\rho^2)(1-(5b-3c)\rho+\rho^2)(1-(3c+1)\rho+\rho^2)} \right\vert\\
&=\frac{1}{3}= \phi_c(-1).
\end{align*}\

\item
Using \cite[Lemma 3.3, p.219]{cho},
%For $|a-1|\le \sin 1$, by Lemma \ref{lems}
the function $f$ in the class $ \mathcal{F}_{b,c}^1$ belongs to the class $  S_{\sin}^*$ if
\begin{align*}
&\frac{\splitfrac{(1+c'+d)r^5+(8+c'+d+2c'd)r^4+(2+6(c'+d)+c'd)r^3}{+(8+c'+d+2c'd)r^2+(1+c'+d)r}}{(1-r^2)(r^4+(c'+d)r^3+(2+c'd)r^2+(c'+d)r+1)}\\
&\le \sin 1- \frac{r^2}{1-r^2}
\end{align*}
or equivalently if,
\begin{align*}
&(1+\sin1)r^6 +(1+(2+\sin1)(c'+d))r^5+(10+\sin1+c'+d\\ & +(3+\sin1)c'd)r^4  +(2+7(c'+d)+c'd)r^3+(9-\sin1+c'+d\\
&+(2-\sin1)c'd)r^2  +(1+(1-\sin1)(c'+d))r-\sin1 \le 0.
\end{align*}
Thus,  the $S_{\sin}^*$-radius for the class $\mathcal{F}_{b,c}^1$ is the smallest root $r_6 \in (0,1)$ of (\ref{eqs1}).
At $z=-\rho=-r_6$, we have
\begin{align*}
\left\vert\frac{zf_0'(z)}{f_0(z)}\right\vert &= \left\vert\frac{\splitfrac{-5b\rho^5+(8-6c-18c^2+5b(1+6c))\rho^4}{\splitfrac{+(-5+15b(-2+c)-3c-9c^2)\rho^3}{+(9(1-c-3c^2)+5b(2+9c))\rho^2-(1+10b)\rho+1}}}{(1-\rho^2)(1-(5b-3c)\rho+\rho^2)(1-(3c+1)\rho+\rho^2)}\right\vert\\
& = 1+\sin1
\end{align*}
which proves sharpness.

\item By \cite[Lemma 2.1, p. 3]{gandhi1},
%Lemma \ref{lemm}
the function $f$ in the class $ \mathcal{F}_{b,c}^1$ belongs to the class $  S_{\leftmoon}^*$ if
\begin{align*}
&\frac{\splitfrac{(1+c'+d)r^5+(8+c'+d+2c'd)r^4+(2+6(c'+d)+c'd)r^3}{+(8+c'+d+2c'd)r^2+(1+c'+d)r}}{(1-r^2)(r^4+(c'+d)r^3+(2+c'd)r^2+(c'+d)r+1)}\\
&\le 1-\sqrt{2}+\frac{1}{1-r^2}
\end{align*}
which simplifies to
\begin{align*}
&-(\sqrt{2}-1)r^6+ (1+\sqrt{2}(\sqrt{2}-1)(c'+d))r^5+(8-\sqrt{2}+c'+d\\&+(3-\sqrt{2})c'd)r^4
+(2+5(c'+d)+c'd)r^3+(5+\sqrt{2}+c'+d+\sqrt{2}c'd)r^2 \notag \\
& +(1+(\sqrt{2}-1)(c'+d))r-2+\sqrt{2} \le 0.
\end{align*}
Thus,  the $S_{\leftmoon}^*$-radius for the class $\mathcal{F}_{b,c}^1$ is the smallest root $r_7 \in (0,1)$ of (\ref{eqm1}).
The bound is best possible for the function $f_0$ defined by (\ref{ext1}). At $z = \rho = r_7$, we have
\begin{align*}
&\left\vert\left( \frac{zf_0'(z)}{f_0(z)}\right)^2-1\right\vert \\& = \left\vert\left(\frac{\splitfrac{5b\rho^5+(-5b(1+6c)+6(-1+c+3c^2))\rho^4}{\splitfrac{+(3+20b+3c-15bc+9c^2)\rho^3}{+(-5+(3-15b)c+9c^2)\rho^2-\rho+1}}}{(1-\rho^2)(1-(5b-3c)\rho+\rho^2)(1-(3c+1)\rho+\rho^2)}\right)^2-1\right\vert\\
 &=\vert(\sqrt{2}-1)^2-1\vert= 2\vert\sqrt{2}-1\vert=2\left\vert\frac{zf_0'(z)}{f_0(z)}\right\vert .
\end{align*}

\item By \cite[Lemma 2.2 p. 202]{kumar},
%For $2(\sqrt{2}-1) < a \le \sqrt{2}$ by Lemma \ref{lemr}
the function $f$ in the class $ \mathcal{F}_{b,c}^1$ belongs to the class $ S_R^*$ if
\begin{align*}
&\frac{\splitfrac{(1+c'+d)r^5+(8+c'+d+2c'd)r^4+(2+6(c'+d)+c'd)r^3}{+(8+c'+d+2c'd)r^2+(1+c'+d)r}}{(1-r^2)(r^4+(c'+d)r^3+(2+c'd)r^2+(c'+d)r+1)}\\
&\le \frac{1}{1-r^2}+ 2-2 \sqrt{2}
\end{align*}
which simplifies to
\begin{align*}
&-(2\sqrt{2}-2)r^6+ (1+(3-2\sqrt{2})(c'+d))r^5+(9-2\sqrt{2}+c'+d\\
&+(4-2\sqrt{2})c'd)r^4
+(2+5(c'+d)+c'd)r^3+(4+2\sqrt{2}+c'+d\\
&+(2\sqrt{2}-1)c'd)r^2
 +(1+(2\sqrt{2}-2)(c'+d))r+2\sqrt{2}-3 \le 0.
\end{align*}
The $S_R^*$-radius for the class $\mathcal{F}_{b,c}^1$ is the smallest root $r_8 \in (0,1)$ of (\ref{eqr1}).
The obtained estimate is best possible for the function $f_0$ defined by (\ref{ext1}). As for $z=\rho = r_8$, we have
\begin{align*}
&\left\vert \frac{zf_0'(z)}{f_0(z)}\right\vert \\& = \left\vert\frac{\splitfrac{5b\rho^5+(-5b(1+6c)+6(-1+c+3c^2))\rho^4}{\splitfrac{+(3+20b+3c-15bc+9c^2)\rho^3+(-5+(3-15b)c+9c^2)\rho^2}{-\rho+1}}}{(1-\rho^2)(1-(5b-3c)\rho+\rho^2)(1-(3c+1)\rho+\rho^2)}\right\vert\\
 &=2(\sqrt{2}-1) = \psi(-1).
\end{align*}

\item By \cite[Lemma 3.2, p. 10]{mnr1},
%For $\sqrt{2}/3 \le a < \sqrt{2}$, by Lemma \ref{lemrl}
the function $f$ in the class $ \mathcal{F}_{b,c}^1$ belongs to the class $  S_{RL}^*$ if
\begin{align*}
&\frac{\splitfrac{(1+c'+d)r^5+(8+c'+d+2c'd)r^4+(2+6(c'+d)+c'd)r^3}{+(8+c'+d+2c'd)r^2+(1+c'+d)r}}{(1-r^2)(r^4+(c'+d)r^3+(2+c'd)r^2+(c'+d)r+1)}\\
&\qquad \qquad \le ((1-(\sqrt{2}-\frac{1}{1-r^2})^2)^{1/2}-(1-(\sqrt{2}-\frac{1}{1-r^2})^2))^{1/2}
\end{align*}
which simplifies to
\begin{align*}
&((1+c'+d)r^5+(8+c'+d+2c'd)r^4+(2+6(c'+d)+c'd)r^3 \\&+(8+c'+d+2c'd)r^2+(1+c'+d)r )^2-(r^4+(c'+d)r^3+(2+c'd)r^2\\
&+(c'+d)r+1)^2((1-r^2)\sqrt{(1-r^2)^2-((\sqrt{2}-\sqrt{2}r^2)-1)^2} \\
&-(1-r^2)^2+((\sqrt{2}-\sqrt{2}r^2)-1)^2) \le 0.
\end{align*}
Thus,  the $S_{RL}^*$-radius for the class $\mathcal{F}_{b,c}^1$ is the smallest root $r_9 \in (0,1)$ of (\ref{eqrl1}).

\item By \cite[Lemma 3.1, p. 307]{grs},
%Lemma \ref{lemg}
the function $f$ in the class $ \mathcal{F}_{b,c}^1$ belongs to the class $  S_{\gamma}^*$ if
\begin{align*}
&\frac{\splitfrac{(1+c'+d)r^5+(8+c'+d+2c'd)r^4+(2+6(c'+d)+c'd)r^3}{+(8+c'+d+2c'd)r^2+(1+c'+d)r}}{(1-r^2)(r^4+(c'+d)r^3+(2+c'd)r^2+(c'+d)r+1)}\\
&\le \frac{1}{1-r^2}\sin(\pi \gamma/2)
\end{align*}
which yields
\begin{align*}
&(1+c'+d)r^5+(8-\sin(\pi \gamma/2)+c'+d+2c'd)r^4\\
&+(2+(6-\sin(\pi \gamma/2))(c'+d)+c'd)r^3  +(8-2\sin(\pi \gamma/2)+c'+d\\
&+(2-\sin(\pi \gamma/2))c'd)r^2 +(1+(1-\sin(\pi \gamma/2))(c'+d))r-\sin(\pi \gamma/2)\le 0.
\end{align*}

\item Using \cite[Lemma 2.2, p. 86]{wani},
%For $1 \le a <5/3$, by Lemma \ref{lemne}
the function $f$ in the class $ \mathcal{F}_{b,c}^1$ belongs to the class $  S^*_{N_e}$ if
\begin{align*}
&\frac{\splitfrac{(1+c'+d)r^5+(8+c'+d+2c'd)r^4+(2+6(c'+d)+c'd)r^3}{+(8+c'+d+2c'd)r^2+(1+c'+d)r}}{(1-r^2)(r^4+(c'+d)r^3+(2+c'd)r^2+(c'+d)r+1)}\\
&\le \frac{5}{3}- \frac{1}{1-r^2}
\end{align*}
equivalently
\begin{align*}
&5r^6+(3+8(c'+d))r^5+(32+3(c'+d)+11c'd)r^4\\
&+(6+21(c'+d)+3c'd)r^3
+(25+3(c'+d)+4c'd)r^2+(3+c'+d)r-2\\
& \le 0.
\end{align*}
The $S^*_{N_e}$-radius for the class $\mathcal{F}_{b,c}^1$ is the smallest root $r_{11} \in (0,1)$ of (\ref{eqn1}).
At $z=-\rho=-r_{11}$, we have
\begin{align*}
\left\vert\frac{zf_0'(z)}{f_0(z)}\right\vert &= \left\vert\frac{\splitfrac{-5b\rho^5+(8-6c-18c^2+5b(1+6c))\rho^4}{\splitfrac{+(-5+15b(-2+c)-3c-9c^2)\rho^3}{+(9(1-c-3c^2)+5b(2+9c))\rho^2-(1+10b)\rho+1}}}{(1-\rho^2)(1-(5b-3c)\rho+\rho^2)(1-(3c+1)\rho+\rho^2)}\right\vert\\
& = \frac{5}{3} \,\,\text{which lies in the boundary of}\,\, \Omega_{N_e}
\end{align*}
which  shows the sharpness.

\item Using \cite[Lemma 2.2, p. 961]{goel},
% For $1 \le a <2e/(1+e)$, by Lemma \ref{lemsg}
the function $f$ in the class $ \mathcal{F}_{b,c}^1$ belongs to the class $ S^*_{SG}$ if
\begin{align*}
&\frac{\splitfrac{(1+c'+d)r^5+(8+c'+d+2c'd)r^4+(2+6(c'+d)+c'd)r^3}{+(8+c'+d+2c'd)r^2+(1+c'+d)r}}{(1-r^2)(r^4+(c'+d)r^3+(2+c'd)r^2+(c'+d)r+1)}\\
&\le \frac{2e}{1+e}- \frac{1}{1-r^2}
\end{align*}
which gives
\begin{align*}
2er^6&+(1+e+(1+3e)(c'+d))r^5+(9+11e+(1+e)(c'+d)\notag\\
&+2(1+2e)c'd)r^4+((1+e)(2+c'd)+7(1+e)(c'+d))r^3+(2(5+4e)\notag\\
&+(1+e)(c'+d)+(3+e)c'd)r^2 +((1+e)+2(c'+d))r+1-e \le 0.
\end{align*}
The $S^*_{SG}$-radius for the class $\mathcal{F}_{b,c}^1$ is the smallest root $r_{12} \in (0,1)$ of (\ref{eqsg1}).
For sharpness consider the function $f_0$ defined in (\ref{ext2}). Let $w=zf_0'(z)/f_0(z)$. At $z=-\rho=-r_{12}$,we have
\begin{align*}
&\left\vert\log\left(\frac{w}{2-w}\right)\right\vert\\
%&=
%\vert\log\vert(\frac{\splitfrac{-5bz^5+(-8+6c+18c^2-5b(1+6c))z^4}{\splitfrac{+(-5+15b(-2+c)-3c-9c^2)z^3}
%{+(-5b(2+9c)+9(-1+c+3c^2))z^2-(1+10b)z-1}}}{\splitfrac{2z^6+(2+15b)z^5+(10+15b-12c+60bc-36c^2)z^4}
%{\splitfrac{+(5-15b(-2+c)+3c+9c^2)z^3}{+(7+3(-1+5b)c-9c^2)z^2-z-1}}}\vert)\vert\\
&=\left\vert\log\left(\frac{\splitfrac{5b\rho^5+(-8+6c+18c^2-5b(1+6c))\rho^4}{\splitfrac{-(-5+15b(-2+c)-3c-9c^2)\rho^3}{+(-5b(2+9c)+9(-1+c+3c^2))\rho^2
-(1+10b)\rho-1}}}{\splitfrac{2\rho^6-(2+15b)\rho^5+(10+15b-12c+60bc-36c^2)\rho^4}{\splitfrac{-(5-15b(-2+c)+3c+9c^2)\rho^3}{+(7+3(-1+5b)c-9c^2)\rho^2
+\rho-1}}}\right)\right\vert\\
& = 1.
\end{align*}
This shows that the radius is sharp.\qedhere
\end{enumerate}
\end{proof}

\begin{remark}
On taking $b=-1$ and $c=-1$, parts (1)-(10) of Theorem \ref{ThmF1} reduces to  \cite[Theorem 2.1]{AV} .
\end{remark}

Next theorem yields the radius estimates of various starlikeness for the class $\mathcal{F}_{b,c}^2 $.

\begin{theorem}\label{ThmF2}
Let $d'=\vert3c-4b\vert\leq2$ and $c'=\vert3c+1\vert\leq2$. For the class $\mathcal{F}_{b,c}^2 $, the following radius results hold:
\begin{enumerate}
\item  The $S^*(\a)$-radius is the smallest  root $r_1 \in (0,1)$ of the equation
\begin{align}\label{eqa2}
(1-\a)d'r^5&+((2-\a)(1+c'd')+d')r^4+(1+(3-\a)c'+5d'+c'd')r^3 \notag \\
&+(5+c'+d'+(1+\a)c'd')r^2+(1+\a(c'+d'))r+\a-1=0.
\end{align}
\item The $S_L^*$-radius is the smallest  root $r_2 \in (0,1)$ of the equation
\begin{align}\label{eql2}
&(1+\sqrt{2})r^5+(\sqrt{2}(1+\sqrt{2})(1+c'd')+d')r^4\notag\\
&+(1+(3+\sqrt{2})c'+7d'+c'd'))r^3 +(7+c'+d'+(3-\sqrt{2})c'd')r^2\notag\\
&+(1+\sqrt{2}(\sqrt{2}-1)(c'+d'))r+1-\sqrt{2}=0.
\end{align}

\item The $S_p$- radius is the smallest  root $r_3 \in (0,1)$ of the equation
\begin{align*}%\label{eqp2}
d'r^5&+(3+2d'+3c'd')r^4+(2+5c'+10d'+2c'd')r^3 \notag \\
&+(10+2(c'+d')+3c'd')r^2+(2+c'+d')r-1=0.
\end{align*}

\item The $S_e^*$- radius is the smallest root $r_4 \in (0,1)$ of the equation
\begin{align*}% \label{eqe2}
&(e-1)d'r^5 +(2e-1+ed'+(2e-1)c'd')r^4 \notag \\
&+(e+(3e-1)c'+5ed'+ec'd')r^3 +(5e+e(c'+d')+(e+1)c'd')r^2\notag \\
&+(e+c'+d')r+1-e=0.
\end{align*}

\item The $S_c^*$- radius is the smallest  root $r_5 \in (0,1)$ of the equation
\begin{align} \label{eqc2}
2d'r^5&+(5+3d'+5c'd')r^4 +(3+8c'+15d'+3c'd')r^3\notag \\
&+(15+3(c'+d')+4c'd')r^2+(3+c'+d')r-2=0.
\end{align}

\item The $S_{\sin}^*$- radius is the smallest  root $r_6 \in (0,1)$ of the equation
\begin{align*}%\label{eqs2}
&(2+\sin 1)d'r^5+((3+\sin 1)(1+c'd')+d')r^4\notag \\
 &+(1+(4+\sin 1)c'+7d'+c'd')r^3+(7+c'+d'+(2-\sin1)c'd')r^2\notag \\
& +(1+(1-\sin 1)(c'+d'))r-\sin 1=0.
\end{align*}

\item The $S_{\leftmoon}^*$- radius is the smallest root $r_7 \in (0,1)$ of  the equation
\begin{align*}%\label{eqm2}
 &\sqrt{2}(\sqrt{2}-1)d'r^5+((3-\sqrt{2})(1+c'd')+d')r^4 \notag \\
 & +(1+(4-\sqrt{2})c'+5d'+c'd')r^3 +(5+c'+d'+\sqrt{2}c'd')r^2\notag \\
 &+(1+(\sqrt{2}-1)(c'+d'))r+\sqrt{2}(1-\sqrt{2})=0.
\end{align*}
\item The $S_R^*-$ radius is the smallest root $r_8 \in (0,1)$ of the equation
\begin{align}\label{eqr2}
 &(3-2\sqrt{2})d'r^5+((4-2\sqrt{2})(1+c'd')+d')r^4  \notag \\
 & +(1+(5-2\sqrt{2})c'+5d'+c'd')r^3+(5+c'+d'+(2\sqrt{2}-1)c'd')r^2\notag \\ &+(1+(2\sqrt{2}-2)(c'+d'))r+2\sqrt{2}-3=0.
\end{align}

\item The $S_{RL}^*$- radius is the smallest root $r_9 \in (0,1)$ of  the equation
\begin{align}\label{eqrl2}
&(d'r^5+(2+d'+2c'd')r^4+(1+3(c'+2d')+c'd')r^3\notag \\& +(6+c'+d'+2c'd')r^2+(1+c'+d')r )^2
-(d'r^3+(1+c'd')r^2\notag \\&+(c'+d')r+1)^2((1-r^2)\sqrt{(1-r^2)^2-((\sqrt{2}-\sqrt{2}r^2)-1)^2}\notag \\ &-(1-r^2)^2+((\sqrt{2}-\sqrt{2}r^2)-1)^2)=0.
\end{align}
\item The $S_{\gamma}^*$-radius is the smallest  root $r_{10} \in (0,1)$ of  the equation
\begin{align*}%\label{eqg2}
d' r ^5&+(2+d'+2c'd')r ^4+(1+3c'+(6-\sin(\pi \gamma/2))d'+c'd') r ^3 \notag\\
& +(6-\sin(\pi \gamma/2)+c'+d'+(2-\sin(\pi \gamma/2))c'd') r ^2 \notag\\
&+(1+(1-\sin(\pi \gamma/2))(c'+d'))r-\sin(\pi \gamma/2)=0.
\end{align*}
\item The $S_{N_e}^*$-radius is the smallest  root $r_{11} \in (0,1)$ of the equation
\begin{align}\label{eqn2}
8d'r^5&+(11(1+c'd')+3d')r^4+(3+7(2c'+3d')+3c'd')r^3 \notag \\
&+(21+3(c'+d')+4c'd')r^2+(3+c'+d')r-2=0.
\end{align}
\item The $S_{SG}^*$-radius is the smallest root $r_{12} \in (0,1)$ of the equation
\begin{align*}%\label{eqsg2}
&(1+3e)d'r^5+(2(1+2e)(1+c'd')+(1+e)d')r^4+((1+e)(1+c'd')\notag \\
&+(3+5e)c' +7(1+e)d')r^3+(7(1+e)+(1+e)(c'+d')+(3+e)c'd')r^2\notag\\
&+(1+e+2(c'+d'))r+1-e=0.
\end{align*}
\end{enumerate}
\end{theorem}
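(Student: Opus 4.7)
My plan is to mimic the proof of Theorem \ref{ThmF1}, but to exploit the stronger hypothesis $|f/g-1|<1$ (equivalently $\RE(g/f)>1/2$) in place of $\RE(f/g)>0$. Given $f\in\mathcal{F}_{b,c}^{2}$ with associated $g\in\mathcal{A}_{3c}$, I introduce the two auxiliary functions
\[
p(z)=\frac{1+z}{z}g(z)=1+(1+3c)z+\cdots,\qquad h(z)=\frac{g(z)}{f(z)}=1+(3c-4b)z+\cdots.
\]
Exactly as in Theorem \ref{ThmF1}, one has $p\in\mathcal{P}_{(3c+1)/2}$, so Lemma \ref{lem1} with $\alpha=0$ supplies the same bound on $|zp'/p|$ as before. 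However, since $\RE(g/f)>1/2$, the function $h$ now lies in $\mathcal{P}_{3c-4b}(1/2)$, so Lemma \ref{lem1} is applied with $\alpha=1/2$ rather than $\alpha=0$; this produces the refined bound
\[
\left|\frac{zh'(z)}{h(z)}\right|\le\frac{r}{1-r^{2}}\cdot\frac{d'r^{2}+2r+d'}{d'r+1},
\]
whose denominator has degree one, not two. The bilinear map estimate (\ref{eqtr}) for $1/(1+z)$ is unchanged.

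From $f=g/h$ one derives the master identity
\[
\frac{zf'(z)}{f(z)}=\frac{zp'(z)}{p(z)}+\frac{1}{1+z}-\frac{zh'(z)}{h(z)},
\]
and combining the three pieces on the common denominator $(r^{2}+c'r+1)(d'r+1)$ yields an inequality of the shape
\[
\left|\frac{zf'(z)}{f(z)}-\frac{1}{1-r^{2}}\right|\le\frac{r\,N(r)}{(1-r^{2})(r^{2}+c'r+1)(d'r+1)}
\]
for an explicit polynomial $N(r)$ whose coefficients are polynomial in $c'$ and $d'$. This inequality is the analogue of the master disk estimate (\ref{eqdisk}) used throughout Theorem \ref{ThmF1}, and it is the only input required for every one of parts (1)--(12).

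Each of the twelve parts is then handled by the template of Theorem \ref{ThmF1}: invoke the appropriate inclusion lemma for the target Ma--Minda subclass (the $\sin 1$ test for $S^{*}_{\sin}$, the $1-\sqrt{2}+1/(1-r^{2})$ test for $S_{\leftmoon}^{*}$, and so on), clear denominators, and collect the polynomial in $r$ whose smallest positive root is the asserted radius. I would carry out part (1) in full to calibrate the reduction---cross-checking against (\ref{eqa2})---and then reproduce the remaining eleven polynomial identities by the same mechanical procedure. Sharpness in each part is obtained by choosing Schwarz-type $p_{0}$ and $h_{0}$ that saturate Lemma \ref{lem1}, producing extremal pairs $(f_{0},g_{0})\in\mathcal{F}_{b,c}^{2}$ analogous to those in (\ref{ext1}) and (\ref{ext2}).

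The main obstacle is purely algebraic bookkeeping: each target class requires expanding a different inequality and carefully collecting coefficients. A small subtlety worth flagging is that the equations in Theorem \ref{ThmF2} are of degree five in $r$ rather than degree six as in Theorem \ref{ThmF1}; this lower degree arises because the denominator $d'r+1$ in the $h$-estimate (in place of $r^{2}+dr+1$) causes one factor to cancel against $(1-r^{2})$ when clearing denominators. The qualitative method is entirely parallel to Theorem \ref{ThmF1}, so the proof reduces to a careful but routine parallel calculation.
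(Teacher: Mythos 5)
Your proposal follows essentially the same route as the paper's own proof: the same decomposition $f=zp/((1+z)h)$ with $p=(1+z)g/z\in\mathcal{P}_{(3c+1)/2}$ and $h=g/f\in\mathcal{P}_{3c-4b}(1/2)$, the same application of Lemma \ref{lem1} with $\a=0$ for $p$ and $\a=1/2$ for $h$ (yielding the degree-one denominator $d'r+1$ and hence the degree-five equations), and the same per-class inclusion lemmas applied to the resulting disk estimate. The only cosmetic difference is that you also sketch sharpness constructions, which the paper omits for this theorem; otherwise the argument is the one given there.
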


\begin{proof}
Let  $f\in \mathcal{F}_{b,c}^2$ and  $g\in \mathcal{A}$ be two functions such that
\begin{equation} \label{eqfg1}
\quad\left\vert\frac{f(z)}{g(z)}-1\right\vert<1\quad \mbox{and}\quad \RE \left(\frac{1+z}{z}g(z) \right)>0\,\, (z \in \D).
\end{equation}
The functions $p,h: \D \to \mathbb{C}$ by
\begin{equation} \label{eqp(z)1}
p(z) = (1+\frac{1}{z})g(z) =  1+(1+3c)z+ \cdots
\end{equation}
and
\begin{equation} \label{eqh(z)1}
h(z) = \frac{g(z)}{f(z)} =  1+(3c-4b)z+ \cdots .
\end{equation}
By (\ref{eqfg1}), (\ref{eqp(z)1}) and (\ref{eqh(z)1}), it is noted that the functions $p \in \mathcal{P}_{(3c+1)/2}$ and $h \in \mathcal{P}_{(3c-4b)}(1/2)$. Also
\[f(z) = \dfrac{g(z)}{h(z)}=\dfrac{zp(z)}{(1+z)h(z)}.\]
On differentiating  logarithmically, we get
\begin{equation}\label{eqf(z)1}
\frac{zf'(z)}{f(z)} =\frac{zp'(z)}{p(z)}- \frac{zh'(z)}{h(z)}+\frac{1}{1+z}.
\end{equation}
On taking $\a = 0$ and $\a=1/2$ in Lemma \ref{lem1}, we have
\begin{equation} \label{eqt2}
\left\vert\frac{zp'(z)}{p(z)}\right\vert \le \frac{r}{1-r^2} \frac{c'r^2+4r+c'}{r^2+c'r+1}\,\,
\mbox{and}\,\,
\left\vert\frac{zh'(z)}{h(z)}\right\vert\le \frac{r}{1-r^2} \frac{d'r^2+2r+d'}{d'r+1}.
\end{equation}
Using (\ref{eqtr}) and (\ref{eqt2}) from (\ref{eqf(z)1}), we obtain
\begin{align}\label{eqdisk2}
\left\vert\frac{zf'(z)}{f(z)}- \frac{1}{1-r^2}\right\vert & \le \frac{r}{1-r^2}\left(\frac{c'r^2+4r+c'}{r^2+c'r+1}+\frac{d'r^2+2r+d'}{d'r+1}+1\right)\notag\\
&= \frac{\splitfrac{d'r^5+(2+d'+2c'd')r^4+(1+3(c'+2d')+c'd')r^3}{+(6+c'+d'+2c'd')r^2+(1+c'+d')r}}{(1-r^2)(d'r^3+(1+c'd')r^2+(c'+d')r+1)}.
\end{align}
It follows by the above inequality that
\[\RE\left(\frac{zf'(z)}{f(z)}\right) \ge\frac{\splitfrac{-d'r^5-(2+d'+2c'd')r^4-(1+3c'+5d'+c'd')r^3}{-(5+c'+d'+c'd')r^2-r+1}}{-d'r^5-(1+c'd')r^4-c'r^3+c'd'r^2+(c'+d')r+1}  \ge 0\]
whenever,
\[d'r^5+(2+d'+2c'd')r^4+(1+3c'+5d'+c'd')r^3+(5+c'+d'+c'd')r^2+r-1 \le 0.\]

\begin{enumerate}
\item In view of (\ref{eqdisk}), we have
\begin{align*}
\RE\left(\frac{zf'(z)}{f(z)}\right) &\ge\frac{\splitfrac{1-d'r^5-(2+d'+2c'd')r^4-(1+3c'+5d'+c'd')r^3}{-(5+c'+d'+c'd')r^2-r}}{-d'r^5-(1+c'd')r^4-c'r^3+c'd'r^2+(c'+d')r+1}\\
&  \ge \a
\end{align*}
whenever
\begin{align*}
(1-\a)d'r^5&+((2-\a)(1+c'd')+d')r^4+(1+(3-\a)c'+5d'+c'd')r^3\\
&+(5+c'+d'+(1+\a)c'd')r^2+(1+\a(c'+d'))r+\a-1 \le 0.
\end{align*}
The $S^*(\a)$-radius for the class $\mathcal{F}_{b,c}^2$ is the smallest root $r_1 \in (0,1)$ of (\ref{eqa2}).

\item Using (\ref{eqdisk2}),  we have
\begin{align*}%\label{eqdisk3}
\left\vert\frac{zf'(z)}{f(z)}- 1\right\vert &\le \left\vert\frac{zf'(z)}{f(z)}- \frac{1}{1-r^2}\right\vert+\frac{r^2}{1-r^2} \notag\\
 &\le  \frac{\splitfrac{2d'r^5+(3(1+c'd')+d')r^4+(1+4c'+7d'+c'd')r^3}{+(7+c'+d'+2c'd')r^2+(1+c'+d')r}}{-d'r^5-(1+c'd')r^4-c'r^3+c'd'r^2+(c'+d')r+1}.
\end{align*}
By \cite[Lemma 2.2, p. 6559]{ali}, the function $f \in \mathcal{F}_{b,c}^2$ belongs to the class $S^*_L$, whenever the following inequality holds
\[\frac{\splitfrac{2d'r^5+(3(1+c'd')+d')r^4+(1+4c'+7d'+c'd')r^3}{+(7+c'+d'+2c'd')r^2+(1+c'+d')r}}{-d'r^5-(1+c'd')r^4-c'r^3+c'd'r^2+(c'+d')r+1} \le \sqrt{2}-1\]
 equivalently
\begin{align*}
 &(1+\sqrt{2})d'r^5+(\sqrt{2}(\sqrt{2}+1)(1+c'd')+d')r^4\\
&+(1+(3+\sqrt{2})c'+7d'+c'd')r^3+(7+c'+d'+(3-\sqrt{2})c'd')r^2\\
& +(1+\sqrt{2}(\sqrt{2}-1)(c'+d'))r+1-\sqrt{2} \le 0.
\end{align*}
The $S_L^*-$ radius for the class $\mathcal{F}_{b,c}^2$ is the smallest root $r_2 \in (0,1)$ of (\ref{eql2}).
%Using triangle inequality and (\ref{eqdisk3}) we get
%\[\vert\frac{zf'(z)}{f(z)}+ 1\vert \le \sqrt{2}+1\] and thus we have
%\[\vert\vert(\frac{zf'(z)}{f(z)}\vert) ^2- 1\vert=1.\]

\item Using \cite[Section 3˘, p. 321]{TN},
%For $1/2 < a \le3/2$, by lemma \ref{lemp}
the disk (\ref{eqdisk2}) lies in  the parabolic domain $\Omega_p= \{w:\RE w >\vert{w-1\vert}\}$ provided that
\[\frac{\splitfrac{d'r^5+(2+d'+2c'd')r^4+(1+3(c'+2d')+c'd')r^3}{+(6+c'+d'+2c'd')r^2+(1+c'+d')r}}{(1-r^2)(d'r^3+(1+c'd')r^2+(c'+d')r+1)}
\le \frac{1}{1-r^2}-\frac{1}{2}\]
which on simplification becomes
\begin{align*}
d'r^5&+(3+2d'+3c'd')r^4+(2+5(c'+2d')+2c'd')r^3\\ &+(10+2(c'+d)+3c'd')r^2+(2+c'+d')r-1\le 0.
\end{align*}
Thus, we get the desired $S_p$- radius for the class $\mathcal{F}_{b,c}^2$.

\item By \cite[Lemma 2.2, p.\ 368]{mnr},
%For $e^{-1} <a \le (e+e^{-1})/2$, by Lemma \ref{leme}
the function $f \in \mathcal{F}_{b,c}^2$  to be in the class $ S_e^*$ if
\[\frac{\splitfrac{d'r^5+(2+d'+2c'd')r^4+(1+3(c'+2d')+c'd')r^3}{+(6+c'+d'+2c'd')r^2+(1+c'+d')r}}{(1-r^2)(d'r^3+(1+c'd')r^2+(c'+d')r+1)}\le \frac{1}{1-r^2}-\frac{1}{e}\]
or
\begin{align*}
&(e-1)d'r^5+(2e-1+ed'+(2e-1)c'd')r^4 \\
& +(e+(3e-1)c'+5ed'+ec'd')r^3 +(5e+e(c'+d')+(e+1)c'd')r^2\\
&+(e+c'+d')r+1-e \le 0.
\end{align*}
Thus, we get the desired $S_e^*$-radius.

\item By making use of \cite[Lemma 2.5, p.926]{sharma},
%For $1/3 <a \le 5/3$, by Lemma \ref{lemc}
the function $f \in \mathcal{F}_{b,c}^2$  to be in the class $ S_c^*$ if
\[\frac{\splitfrac{d'r^5+(2+d'+2c'd')r^4+(1+3(c'+2d')+c'd')r^3}{+(6+c'+d'+2c'd')r^2+(1+c'+d')r}}{(1-r^2)(d'r^3+(1+c'd')r^2+(c'+d')r+1)}\le \frac{1}{1-r^2}-\frac{1}{3}\]
or equivalently, if
\begin{align*}
& 2d'r^5+(5+3d'+5c'd')r^4 +(3+8c'+15d'+3c'd')r^3 \\
&+(15+3(c'+d')+4c'd')r^2+(3+c'+d')r-2 \le 0.
\end{align*}
The $S_c^* $- radius is the smallest e root $r_5 \in (0,1)$ of (\ref{eqc2}).

\item Using \cite[Lemma 3.3, p.219]{cho},
% For $|a-1|\le \sin 1$, by Lemma \ref{lems}
the function $f \in \mathcal{F}_{b,c}^2$  to be in the class $ S_{\sin}^*$ if
\[\frac{\splitfrac{d'r^5+(2+d'+2c'd')r^4+(1+3(c'+2d')+c'd')r^3}{+(6+c'+d'+2c'd')r^2+(1+c'+d')r}}{(1-r^2)(d'r^3+(1+c'd')r^2+(c'+d')r+1)}\le \sin 1- \frac{r^2}{1-r^2}\]
or equivalently if,
\begin{align*}
&(2+\sin1)d'r^5+((3+\sin1)(1+c'd')+d')r^4 \\
& +(1+(4+\sin 1)c'+7d'+c'd')r^3+(7+c'+d'+(2-\sin1)c'd')r^2\\
&  +(1+(1-\sin1)(c'+d'))r-\sin1 \le 0.
\end{align*}
Thus,  we get the desired  $S_{\sin}^*$- radius.

\item By \cite[Lemma 2.1, p. 3]{gandhi1},
% By Lemma \ref{lemm}
the function $f \in \mathcal{F}_{b,c}^2$  to be in the class $ S_{\leftmoon}^*$ if
\[\frac{\splitfrac{d'r^5+(2+d'+2c'd')r^4+(1+3(c'+2d')+c'd')r^3}{+(6+c'+d'+2c'd')r^2+(1+c'+d')r}}{(1-r^2)(d'r^3+(1+c'd')r^2+(c'+d')r+1)}\le 1-\sqrt{2}+\frac{1}{1-r^2}\]
which on simplification becomes
\begin{align*}
 &\sqrt{2}(\sqrt{2}-1)d'r^5+((3-\sqrt{2})(1+c'd')+d')r^4\\
&+(1+(4-\sqrt{2})c'+5d'+c'd')r^3+(5+c'+d'+\sqrt{2}c'd')r^2\\
&+(1+(\sqrt{2}-1)(c'+d'))r+\sqrt{2}(1-\sqrt{2}) \le 0.
\end{align*}
Thus,  we get the desired   $S_{\leftmoon}^* $-radius.

\item By \cite[Lemma 2.2 p. 202]{kumar},
%For $2(\sqrt{2}-1) < a \le \sqrt{2}$ by Lemma \ref{lemr}
the function $f \in \mathcal{F}_{b,c}^2$ belongs to the class $S_R^*$ if
\[\frac{\splitfrac{d'r^5+(2+d'+2c'd')r^4+(1+3(c'+2d')+c'd')r^3}{+(6+c'+d'+2c'd')r^2+(1+c'+d')r}}{(1-r^2)(d'r^3+(1+c'd')r^2+(c'+d')r+1)}\le \frac{1}{1-r^2}+ 2-2 \sqrt{2}\]
equivalently
\begin{align*}
 &(3-2\sqrt{2})d'r^5+((4-2\sqrt{2})(1+c'd')+d')r^4\\
& +(1+(5-2\sqrt{2})c'+5d'+c'd')r^3+(5+c'+d'+(2\sqrt{2}-1)c'd')r^2\\
& +(1+(2\sqrt{2}-2)(c'+d'))r+
2\sqrt{2}-3 \le 0.
\end{align*}
The $S_R^*$ -radius is the smallest root $r_8 \in (0,1)$ of (\ref{eqr2}).

\item By using \cite[Lemma 3.2, p. 10]{mnr1},
%For $\sqrt{2}/3 \le a < \sqrt{2}$, by Lemma \ref{lemrl}
the function $f \in \mathcal{F}_{b,c}^2$ belongs to the class $S_{RL}^*$ if
\begin{align*}
&\frac{\splitfrac{d'r^5+(2+d'+2c'd')r^4+(1+3(c'+2d')+c'd')r^3}{+(6+c'+d'+2c'd')r^2+(1+c'+d')r}}{(1-r^2)(d'r^3+(1+c'd')r^2+(c'+d')r+1)}\\
&\le ((1-(\sqrt{2}-\frac{1}{1-r^2})^2)^{1/2}-(1-(\sqrt{2}-\frac{1}{1-r^2})^2))^{1/2}
\end{align*}
which simplifies to
\begin{align*}
&(d'r^5+(2+d'+2c'd')r^4+(1+3(c'+2d')+c'd')r^3\\
& +(6+c'+d'+2c'd')r^2+(1+c'+d')r )^2
-(d'r^3+(1+c'd')r^2\\
& +(c'+d')r+1)^2((1-r^2)\sqrt{(1-r^2)^2-((\sqrt{2}-\sqrt{2}r^2)-1)^2}-(1-r^2)^2\\
&+((\sqrt{2}-\sqrt{2}r^2)-1)^2) \le 0.
\end{align*}
The $S_{RL}^*$-radius  is the smallest  root $r_9 \in (0,1)$ of (\ref{eqrl2}).

\item By \cite[Lemma 3.1, p. 307]{grs},
%By Lemma \ref{lemg}
the function $f \in \mathcal{F}_{b,c}^2$ belongs to the class $  S_{\gamma}^*$ if
\[\frac{\splitfrac{d'r^5+(2+d'+2c'd')r^4+(1+3(c'+2d')+c'd')r^3}{+(6+c'+d'+2c'd')r^2+(1+c'+d')r}}{(1-r^2)(d'r^3+(1+c'd')r^2+(c'+d')r+1)}\le \frac{1}{1-r^2}\sin(\pi \gamma/2)\]
which gives
\begin{align*}
d'r^5& +(2+d'+2c'd')r^4+(1+3c'+(6-\sin(\pi \gamma/2))d'+c'd')r^3 \\
& +(6-\sin(\pi \gamma/2)+c'+d'+(2-\sin(\pi \gamma/2))c'd')r^2 \\
&+(1+(1-\sin(\pi \gamma/2))(c'+d'))r-\sin(\pi \gamma/2)\le 0.
\end{align*}

\item Using \cite[Lemma 2.2, p. 86]{wani},
%For $1 \le a < 5/3$, by Lemma \ref{lemne}
the function $f \in \mathcal{F}_{b,c}^2$  to be in the class $  S^*_{N_e}$ if
\[\frac{\splitfrac{d'r^5+(2+d'+2c'd')r^4+(1+3(c'+2d')+c'd')r^3}{+(6+c'+d'+2c'd')r^2+(1+c'+d')r}}{(1-r^2)(d'r^3+(1+c'd')r^2+(c'+d')r+1)}\le \frac{5}{3}-\frac{1}{1-r^2}\]
equivalently
\begin{align*}
8d'r^5&+(11(1+c'd')+3d')r^4+(3+7(2c'+3d')+3c'd')r^3 \notag \\
&+(21+3(c'+d')+4c'd')r^2+(3+c'+d')r-2 \le 0.
\end{align*}
Thus,  the $S^*_{N_e} $- radius for the class $\mathcal{F}_{b,c}^2$ is the smallest positive root $r_{11} \in (0,1)$ of (\ref{eqn2}).

\item Using \cite[Lemma 2.2, p. 961]{goel},
%For $1 \le a < 2e/(1+e)$, by Lemma \ref{lemsg}
the function $f \in \mathcal{F}_{b,c}^2$  to be in the class $  S^*_{SG}$ if the following inequality holds
\[\frac{\splitfrac{d'r^5+(2+d'+2c'd')r^4+(1+3(c'+2d')+c'd')r^3}{+(6+c'+d'+2c'd')r^2+(1+c'+d')r}}{(1-r^2)(d'r^3+(1+c'd')r^2+(c'+d')r+1)}\le \frac{2e}{1+e}-\frac{1}{1-r^2}\]
equivalently
\begin{align*}
&(1+3e)d'r^5+(2(1+2e)(1+c'd')+(1+e)d')r^4
+((1+e)(1+c'd')\\
&+(3+5e)c'+7(1+e)d')r^3 +(7(1+e)+(1+e)(c'+d')\\
&+(3+e)c'd')r^2+(1+e+2(c'+d'))r+1-e\le 0.
\end{align*}
Thus, we get the desired  $S^*_{SG}$ -radius. \qedhere
\end{enumerate}
\end{proof}

\begin{remark}
For $b=-1$ and $c=-1$, parts (1)-(10) of Theorem \ref{ThmF2} becomes \cite[Theorem 2.4]{AV} .
\end{remark}

The next result provides certain starlikeness for the class $\mathcal{F}_b^3 $ and proof is obtained as the proof of Theorem \ref{ThmF2}, so it is  not given.
\begin{theorem}\label{ThmF3}
Let $b'=\vert 1+3b\vert\leq2$. For the class $\mathcal{F}_b^3 $,  the following radius results hold:
\begin{enumerate}
\item  The $S^*(\a)$- radius is the smallest  root $r_1 \in (0,1)$ of the equation
\begin{equation*}\label{eqa3}
-\a r^4+(1+(1-\a)b')r^3  +(3+b')r^2+(1+\a b')r+\a-1=0.
\end{equation*}
\item The $S_L^* $- radius is the smallest  root $r_2 \in (0,1)$ of the equation
\begin{equation*}\label{eql3}
\sqrt{2}r^4+(1+(1+\sqrt{2})b')r^3+(5+b')r^2 +(1+\sqrt{2}(\sqrt{2}-1)b')r+1-\sqrt{2}=0.
\end{equation*}
The result is sharp.
\item The $S_p $- radius is the smallest  root $r_3 \in (0,1)$ of the equation
\begin{equation*}\label{eqp3}
-r^4+(2+b')r^3+2(3+b')r^2+(2+b')r-1=0.
\end{equation*}

\item The $S_e^*$- radius is the smallest  root $r_4 \in (0,1)$ of the equation
\begin{equation*} \label{eqe3}
-r^4+(e+(e-1)b')r^3 +e(3+b')r^2+(e+b')r+1-e=0.
\end{equation*}
\end{enumerate}
\end{theorem}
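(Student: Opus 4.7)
The plan is to mirror the structure of Theorems~\ref{ThmF1} and \ref{ThmF2}, but with a substantial simplification: the class $\mathcal{F}_b^3$ involves only the single function $f$ (no auxiliary $g$), so the representation of $zf'(z)/f(z)$ decomposes into just two pieces rather than three. First I would introduce $p: \D \to \C$ by
\begin{equation*}
p(z) = \frac{1+z}{z}f(z) = 1+(1+3b)z+\cdots,
\end{equation*}
so that $\RE(p(z))>0$ by hypothesis and $p \in \mathcal{P}_{(1+3b)/2}$. Inverting gives $f(z) = zp(z)/(1+z)$, and logarithmic differentiation yields
\begin{equation*}
\frac{zf'(z)}{f(z)} = \frac{zp'(z)}{p(z)} + \frac{1}{1+z}.
\end{equation*}

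Next I would bound each summand on $|z|=r$. The bilinear image bound~(\ref{eqtr}) already gives $|1/(1+z) - 1/(1-r^2)| \le r/(1-r^2)$, and Lemma~\ref{lem1} applied with $\a=0$ (using $b' = |1+3b| \le 2$) gives
\begin{equation*}
\left|\frac{zp'(z)}{p(z)}\right| \le \frac{r}{1-r^2}\cdot\frac{b'r^2+4r+b'}{r^2+b'r+1}.
\end{equation*}
Combining these two estimates produces the master disk containment
\begin{equation*}
\left|\frac{zf'(z)}{f(z)} - \frac{1}{1-r^2}\right| \le \frac{r}{1-r^2}\left(\frac{b'r^2+4r+b'}{r^2+b'r+1} + 1\right) = \frac{(1+b')r^3+(3+b')r^2+r}{(1-r^2)(r^2+b'r+1)}
\end{equation*}
after algebraic simplification. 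This single inequality is the engine that drives all four parts of the theorem.

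With the master disk in hand, each of parts (1)--(4) reduces to a one-line application of a standard containment lemma: part (1) uses the simple estimate $\RE(zf'(z)/f(z)) \ge 1/(1-r^2) - (\text{radius})$ and requires this to exceed $\a$; part (2) cites~\cite[Lemma 2.2]{ali} demanding the disk has distance at most $\sqrt 2 - 1$ from~$1$; part (3) uses the parabolic criterion from~\cite{TN}; and part (4) uses~\cite[Lemma 2.2]{mnr} with threshold $1/e$. In each case, clearing denominators and rearranging recovers precisely the quartic equations in the statement, whose smallest positive root in $(0,1)$ gives the radius. The main obstacle is purely bookkeeping: verifying that the algebraic collapse of each combined inequality into the form $-\a r^4 + \cdots + (\a - 1) = 0$, etc., matches the stated polynomials coefficient-by-coefficient.

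Finally, for sharpness I would use the extremal function $f_1$ already constructed in the excerpt just before the theorem, namely
\begin{equation*}
f_1(z) = \frac{z(1+(1+3b)z+z^2)}{(1+z)(1-z^2)},
\end{equation*}
which belongs to $\mathcal{F}_b^3$ by the Schwarz-lemma verification there. Evaluating $zf_1'(z)/f_1(z)$ at $z=\rho$ (or $z=-\rho$, depending on which boundary point of the master disk is relevant for the particular subclass) will realise equality in the governing inequality, confirming that each stated root is the exact radius. Part (2) is flagged as sharp in the statement, and the same computation at the extremal point should yield sharpness in the remaining cases as well.
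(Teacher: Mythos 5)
Your decomposition $p(z)=(1+z)f(z)/z$, $f(z)=zp(z)/(1+z)$, $zf'/f=zp'/p+1/(1+z)$, followed by Lemma~\ref{lem1} with $\a=0$ and the disk bound (\ref{eqtr}), is exactly the route the paper intends: the authors omit this proof entirely, stating only that it goes as the proof of Theorem~\ref{ThmF2} (which for $\mathcal{F}_b^3$ degenerates to the two-term decomposition you describe, since there is no auxiliary $h$). The choice of extremal function $f_1$ is also the one the paper designates for this class. So the approach is correct and essentially identical to the paper's.

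One concrete error to fix: your ``master disk'' radius is mis-simplified. Since
\begin{equation*}
\frac{b'r^2+4r+b'}{r^2+b'r+1}+1=\frac{(1+b')r^2+(4+b')r+(1+b')}{r^2+b'r+1},
\end{equation*}
the correct bound is
\begin{equation*}
\left|\frac{zf'(z)}{f(z)}-\frac{1}{1-r^2}\right|\le\frac{(1+b')r^3+(4+b')r^2+(1+b')r}{(1-r^2)(r^2+b'r+1)},
\end{equation*}
not $((1+b')r^3+(3+b')r^2+r)/((1-r^2)(r^2+b'r+1))$ as you wrote. With your numerator the four quartics in the statement do \emph{not} come out (e.g.\ part (1) would give $(2+b')r^2$ instead of $(3+b')r^2$); with the corrected numerator each of (1)--(4) checks out coefficient-by-coefficient against the stated equations. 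Also note that the paper asserts sharpness only for part (2) of this theorem, so your closing remark that the extremal computation ``should yield sharpness in the remaining cases as well'' goes beyond what is claimed and would need to be carried out explicitly (at the appropriate boundary point $z=\pm\rho$) if you want to assert it.
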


\begin{remark}
On taking $b=-1$  in parts (1)-(4) of Theorem \ref{ThmF3}, we obtain \cite[Theorem 2.6]{AV} .
\end{remark}

%%%% Acknowledgment %%%%%%%%
%\section*{Acknowledgement}
%The author would like to thank the referees for the helpful
%suggestions.

%%%% Bibliography  %%%%%%%%%%


\begin{thebibliography}{99}

\bibitem{ali1}R. M. Ali,  N. E. Cho, N. K. Jain \ and\ V. Ravichandran, Radii of starlikeness and convexity for functions with fixed second coefficient defined by subordination, {\em Filomat} {\bf  26}(3)  (2012) 553--561

\bibitem{ali}R. M. Ali,  N. K. Jain \ and\ V. Ravichandran, Radii of starlikeness associated with the lemniscate of Bernoulli and the left-half plane. {\em Appl. Math. Comput.} {\bf 218}(11)  (2012) 6557--6565
\bibitem{juneja}R. M. Ali, N. K. Jain\ and\ V. Ravichandran, On the radius constants for classes of analytic functions, {\em Bull. Malays. Math. Sci. Soc.} (2) {\bf 36}(1)  (2013) 23--38
 \bibitem{ANV}R. M. Ali, S. Nagpal\ and\ V. Ravichandran, Second-order differential subordination for analytic functions with fixed initial coefficient, {\em Bull. Malays. Math. Sci. Soc.} (2) {\bf 34}(3) (2011), 611--629.

\bibitem{ali2}R. M. Ali, V. Kumar, V. Ravichandran\ and\ S.S. Kumar,  Radius of starlikeness for analytic functions with fixed second coefficient, {\em Kyungpook Math. J.} {\bf 57}(3)  (2017) 473--492
%\bibitem{ravi}S. Anand, N. K. Jain\ and\ S. Kumar, Sharp Bohr radius constants for certain analytic functions, Bull. Malays. Math. Sci. Soc., {44}(3) (2021), 1771--1785.

\bibitem{Branan69}D. A. Brannan\ and\ W. E. Kirwan, On some classes of bounded univalent functions, {\em J. London Math. Soc.} (2) {\bf 1} (1969) 431--443

\bibitem{cho}N. E. Cho, V. Kumar,  S.S. Kumar\ and\  V. Ravichandran, Radius problems for starlike functions associated with the sine function, {\em Bull. Iranian Math. Soc.} {\bf 45}(1)  (2019) 213--232


\bibitem{gandhi1}S. Gandhi\ and\ V. Ravichandran, Starlike functions associated with a lune, {\em Asian-Eur. J. Math.} {\bf 10}(4)  (2017) 12pp

\bibitem{grs}A. Gangadharan, V. Ravichandran\ and\ T. N. Shanmugam, Radii of convexity and strong starlikeness for some classes of analytic functions, {\em J. Math. Anal. Appl.} {\bf 211}(1)  (1997) 301--313

 \bibitem{goel}P. Goel\ and\ S. Sivaprasad Kumar, Certain class of starlike functions associated with modified sigmoid function, {\em Bull. Malays. Math. Sci. Soc.} {\bf 43}(1)  (2020) 957--991

\bibitem{Gronwall}T. H. Gronwall, On the distortion in conformal mapping when the second coefficient
in the mapping function has an assigned value, {\em Nat. Acad. Proc.} {\bf 6} (1920) 300-–302

\bibitem{gupta80}V. P. Gupta, P. K. Jain\ and\ I. Ahmad, On the radius of univalence of certain classes of analytic functions with fixed second coefficient, {\em Rend. Mat. (6)} {\bf 12}(3-4)   (1979)(1980) 423--430

	
\bibitem{janowski73}
W. Janowski, Some extremal problems for certain families of analytic functions. I, {\em Ann. Polon. Math.} {\bf 28} (1973) 297--326
	

\bibitem{kaplan}{ W. Kaplan}, { Close-to-convex schlicht functions}, {\em Michigan Math. J.} {\bf 1}(1952)(1953), 169--185 	


%\bibitem{kanaga}{\sc R. Kanaga, V. Ravichandran}, {\em Starlikeness for certain close-to-star functions}, Hacet. J. Math. Stat. {\bf 50}(2)(2021), 414-–432.
%\bibitem{kumar83}Kumar, V.: On univalent functions with fixed second coefficient. Indian J. Pure Appl. Math. {\bf 14}(11), 1424--1430 (1983)

 \bibitem{kumar83}V. Kumar, On univalent functions with fixed second coefficient, {\em Indian J. Pure Appl. Math.} {\bf 14}(11) (1983) 1424--1430



\bibitem{SPA21}{ S. Kumar, P.  Rai\ and\ A.  \c{C}etinkaya}, { Radius estimates of certain analytic functions},  {\em Honam Math. J.}  {\bf 43}(4)(2021) 627--639

%\bibitem{kumar} Kumar, S., Ravichandran, V.: A subclass of starlike functions associated with a rational function. Southeast Asian Bull. Math. {\bf 40}(2), 199--212 (2016)


\bibitem{kumar}S. Kumar\ and\ V. Ravichandran, A subclass of starlike functions associated with a rational function, {\em Southeast Asian Bull. Math.} {\bf 40}(2) (2016) 199--212
%\bibitem{sushil17} Kumar, S., Ravichandran, V., Verma, S.: Initial coefficients of starlike functions with real coefficient. Bull. Iranian Math. Soc. {\bf 43}(6), 1837--1854 (2017)


\bibitem{sushil17} S. Kumar, V. Ravichandran and S. Verma, Initial coefficients of starlike functions with real coefficient, {\em Bull. Iranian Math. Soc.} {\bf 43}(6) (2017) 1837--1854

\bibitem{lee}S. K. Lee, V. Ravichandran\ and\ S. Supramaniam, Applications of differential subordination for functions with fixed second coefficient to geometric function theory, {\em Tamsui Oxf. J. Inf. Math. Sci.} {\bf 29}(2) (2013) 267--284

%\bibitem{lee1}S. K. Lee, K. Khatter\ and\ V. Ravichandran, Radius of Starlikeness for Classes of Analytic Functions, Bull. Malays. Math. Sci. Soc., {43}(6) (2020), 4469--4493.

\bibitem{MM}W. C. Ma\ and\ D. Minda, A unified treatment of some special classes of univalent functions, in {\it Proceedings of the Conference on Complex Analysis (Tianjin, 1992)}, 157--169, Conf. Proc. Lecture Notes Anal., I, Int. Press, Cambridge, MA


\bibitem{gregor}T. H. MacGregor, The radius of convexity for starlike functions of order ${1\over 2}$, {\em Proc. Amer. Math. Soc.} {\bf 14} (1963) 71--76

%\bibitem{gregor1}T. H. MacGregor, The radius of univalence of certain analytic functions, Proc. Amer. Math. Soc., {14} (1963), 514--520.
%\bibitem{gregor2}T. H. MacGregor, A class of univalent functions, Proc. Amer. Math. Soc. { 15} (1964), 311--317.
\bibitem{carty}C. P. McCarty, Functions with real part greater than $\alpha $, {\em Proc. Amer. Math. Soc.} {\bf 35}  (1972) 211--216

\bibitem{mnr}R. Mendiratta, S. Nagpal\ and\ V. Ravichandran, On a subclass of strongly starlike functions associated with exponential function, {\em Bull. Malays. Math. Sci. Soc.} {\bf 38}(1) (2015) 365--386
\bibitem{mnr1}R. Mendiratta, S. Nagpal\ and\ V. Ravichandran, A subclass of starlike functions associated with left-half of the lemniscate of Bernoulli, {\em Internat. J. Math.} {\bf 25}(9) (2014) 1450090 17 pp
\bibitem{mendiratta1}R. Mendiratta and V. Ravichandran, Livingston problem for close-to-convex functions
with fixed second coefficient, {\em Jnanabha} {\bf 43} (2013) 107–-122


%\bibitem{mnr2}R. Mendiratta, S. Nagpal\ and\ V. Ravichandran, Second-order differential superordination for analytic functions with fixed initial coefficient, Southeast Asian Bull. Math., {39}(6) (2015), 851--864.



%\bibitem{nagpal}S. Nagpal\ and\ V. Ravichandran, Applications of the theory of differential subordination for functions with fixed initial coefficient to univalent functions, Ann. Polon. Math. { 105} (2012), no.~3, 225--238.



\bibitem{nehari}Z. Nehari, {\it Conformal mapping}, McGraw-Hill Book Co., Inc., New York, Toronto, London, 1952

%\bibitem{}D. Z. Pa\v{s}kuleva, On the radius of alpha-starlikeness for starlike functions of order beta and fixed second coefficient, Pliska Stud. Math. Bulgar. {\bf 10} (1989), 157--163.
%\bibitem{pas89} Pa\v{s}kuleva, D.Z.: On the radius of alpha-starlikeness for starlike functions of order beta and fixed second coefficient. Pliska Stud. Math. Bulgar. {\bf 10}, 157--163 (1989)

\bibitem{pas89} { D. Z. Pa\v{s}kuleva}, { On the radius of alpha-starlikeness for starlike functions of order beta and fixed second coefficient}, {\em Pliska Stud. Math. Bulgar.} {\bf 10}(1989) 157--163
%\bibitem{raina} Raina, R.K.,  Sok\'{o}\l, J.: Some properties related to a certain class of starlike functions. C. R. Math. Acad. Sci. Paris {\bf 353}(11), 973--978 (2015)


\bibitem{raina}R. K. Raina\ and\ J. Sok\'{o}\l, Some properties related to a certain class of starlike functions, {\em C. R. Math. Acad. Sci. Paris} {\bf 353}(11)  (2015) 973--978
%\bibitem{ratti}J. S. Ratti, The radius of convexity of certain analytic functions, Indian J. Pure Appl. Math. { 1} (1970), no.~1, 30--36.
%\bibitem{ratti1}J. S. Ratti, The radius of univalence of certain analytic functions, Math. Z. { 107} (1968), 241--248.

\bibitem{rob}M. S. Robertson, Certain classes of starlike functions, {\em Michigan Math. J.} {\bf 32}(2) (1985) 135--140

\bibitem{Ronn93}F. R\o nning, Uniformly convex functions and a corresponding class of starlike functions, {\em Proc. Amer. Math. Soc.} {\bf 118}(1) (1993) 189--196

\bibitem{AV} A. Sebastian\ and\ V. Ravichandran, Radius of starlikeness of certain analytic functions, {\em Math. Slovaca} {\bf  71}(1)(2021) 83--104

%\bibitem{TN1} T. N. Shanmugam, Convolution and differential subordination, Internat. J. Math. Math. Sci., {12}(2) (1989), 333--340.

%\bibitem{TN} Shanmugam, T.N., Ravichandran, V.: Certain properties of uniformly convex functions, in {Computational methods and function theory 1994 (Penang)}, 319--324, Ser. Approx. Decompos. 5, World Sci. Publ., River Edge, NJ (1994)


\bibitem{TN}T. N. Shanmugam\ and\ V. Ravichandran, Certain properties of uniformly convex functions, in {\it Computational methods and function theory 1994 (Penang)}, 319--324, Ser. Approx. Decompos., 5, World Sci. Publ., River Edge, NJ



\bibitem{sharma}K. Sharma, N. K. Jain\ and\ V. Ravichandran, Starlike functions associated with a cardioid, {\em Afr. Mat.} {\bf 27}(5-6) (2016) 923--939


\bibitem{SS1}J. Sok\'{o}\l\ and\ J. Stankiewicz, Radius of convexity of some subclasses of strongly starlike functions, {\em Zeszyty Nauk. Politech. Rzeszowskiej Mat. No.} {\bf 19} (1996) 101--105

\bibitem{sokol}J. Sok\'{o}\l, Radius problems in the class $S^*_L$, {\em Appl. Math. Comput.} {\bf 214}(2) (2009) 569--573


\bibitem{Ural87}B. A. Uralegaddi\ and\ H. S. Patil, The starlike functions of order $\alpha$ and type $\beta$ with fixed second coefficients, {\em J. Karnatak Univ. Sci.} {\bf 32} 219--226 (1987)

\bibitem{wani} L. A. Wani\ and\ A. Swaminathan, Starlike and convex functions associated with a nephroid domain, {\em Bull. Malays. Math. Sci. Soc.} {\bf 44}(1)   (2021) 79--104



%
%
% \bibitem{Boggs1987}
% {\sc P.T. Boggs, R.H. Byrd, R.B. Schnabel}, {\em A stable and efficient algorithm for nonlinear orthogonal distance regression},
% SIAM J. Sci. Statist. Comput. {\bf 8}(1987), 1052--1078.
%
% \bibitem{Dennis1996}
% {\sc J.E. Dennis, Jr, R.B. Schnabel}, {\em Numerical Methods for Unconstrained Optimization and Nonlinear Equations}, SIAM, Philadelphia, 1996.
%
% \bibitem{Watson1983}
% {\sc G.A. Watson}, {\em The total approximation problem}, in: {\em Approximation Theory IV}, (C.K. Chui, L.L. Schumaker and J.D. Ward, Eds.), Academic Press, New York, 1983, 723--728.
 \end{thebibliography}
\end{document}